\begin{document}
\newtheorem{theorem}{Theorem}[section]
\newtheorem{lemma}[theorem]{Lemma}
\newtheorem{definition}[theorem]{Definition}
\newtheorem{conjecture}[theorem]{Conjecture}
\newtheorem{proposition}[theorem]{Proposition}
\newtheorem{claim}[theorem]{Claim}
\newtheorem{algorithm}[theorem]{Algorithm}
\newtheorem{corollary}[theorem]{Corollary}
\newtheorem{observation}[theorem]{Observation}
\newtheorem{problem}[theorem]{Open Problem}
\newtheorem{remark}[theorem]{Remark}
\newcommand{\noin}{\noindent}
\newcommand{\ind}{\indent}
\newcommand{\om}{\omega}
\newcommand{\ff}{\mathcal F}
\newcommand{\pp}{\mathcal P}
\newcommand{\cc}{{\mathcal C}_{\le cn}}
\newcommand{\cco}{{\mathcal C}_{\le n}}
\newcommand{\bAC}{\overline{\AC}}
\newcommand{\ppp}{\mathfrak P}
\newcommand{\N}{{\mathbb N}}
\newcommand{\Z}{{\mathbb Z}}
\newcommand{\LL}{\mathbb{L}}
\newcommand{\R}{{\mathbb R}}
\newcommand{\E}{\mathbb E}
\newcommand{\Prob}{\mathbb{P}}
\newcommand{\eps}{\varepsilon}
\newcommand{\ram}[1]{\hat{R}({#1})}
\newcommand{\G}{{\mathcal{G}}}

\newcommand{\Ss}{{\mathcal S}}
\newcommand{\Nn}{{\mathcal N}}

\newcommand{\ceil}[1]{\left \lceil #1 \right \rceil}
\newcommand{\floor}[1]{\left \lfloor #1 \right \rfloor}
\newcommand{\size}[1]{\left \vert #1 \right \vert}
\newcommand{\dist}{\mathrm{dist}}

\title{Size-Ramsey numbers of cycles versus a path}

\author{Andrzej Dudek}
\address{Department of Mathematics, Western Michigan University, Kalamazoo, MI, USA}
\thanks{The first author was supported in part by Simons Foundation Grant \#244712 and by the National Security Agency under Grant Number H98230-15-1-0172. The United States Government is authorized to reproduce and distribute reprints notwithstanding any copyright notation hereon.}
\email{\tt andrzej.dudek@wmich.edu}

\author{Farideh Khoeini}
\address{Department of Mathematical Sciences, Isfahan University of Technology, Isfahan, Iran}
\email{\tt  f.khoeini@math.iut.ac.ir}

\author{Pawe\l{} Pra\l{}at}
\address{Department of Mathematics, Ryerson University, Toronto, ON, Canada 
and 
The Fields Institute for Research in Mathematical Sciences, Toronto, ON, Canada}
\thanks{The third author was supported by NSERC and Ryerson University}
\email{\tt pralat@ryerson.ca}

\begin{abstract}
The size-Ramsey number $\ram{\ff,H}$ of a family of graphs $\ff$ and a graph $H$ is the smallest integer $m$ such that there exists a graph $G$ on $m$ edges with the property that any colouring of the edges of $G$ with two colours, say, red and blue, yields a red copy of a graph  from $\ff$ or a blue copy of $H$. In this paper we first focus on $\ff = \cc$, where $\cc$ is the family of cycles of length at most $cn$, and $H = P_n$. In particular, we show that $2.00365 n \le \ram{\cco,P_n} \le 31n$. Using similar techniques, we also managed to analyze $\ram{C_n,P_n}$, which was investigated before but only using the regularity method.
\end{abstract}

\maketitle

\section{Introduction}

Following standard notations, for any family of graphs $\ff$ and any graph $H$, we write $G\to (\ff, H)$ if any colouring of the edges of $G$ with 2 colours, red and blue, yields a red copy of some graph from $\ff$ or a blue copy of $H$. For simplicity, we write $G\to (F,H)$ if $\ff = \{F\}$ and $G\to F$ instead of $G\to (F,F)$. We define the \emph{size-Ramsey number} of the pair $(\ff,H)$ as 
$$
\ram{\ff,H} = \min \{ |E(G)| : G \to (\ff,H) \}
$$ 
and again, for simplicity, $\ram{F,H}=\ram{\{F\},H}$ and $\ram{F} = \ram{F,F}$. 

One of the most studied directions in this area is the size-Ramsey number of $P_n$, a path on $n$ vertices. It is obvious that $\ram{P_n} = \Omega(n)$ and that $\ram{P_n} = O(n^2)$ (for example, $K_{2n}\to P_n$), but the exact behaviour of $\ram{P_n}$ was not known for a long time. In fact, Erd\H{o}s~\cite{E81} offered \$100 for a proof or disproof that 
\[
\ram{P_n} / n \to \infty \quad \text{ and } \quad \ram{P_n} / n^2 \to 0.
\]
This problem was solved by Beck~\cite{B83} in 1983 who, quite surprisingly, showed that $\ram{P_n} < 900 n$. (Each time we refer to inequality such as this one, we mean that the inequality holds for sufficiently large $n$.) A variant of his proof, provided by Bollob\'{a}s~\cite{B01}, gives $\ram{P_n} < 720 n$. Very recently, different and more elementary arguments were used by the first and the third author of this paper~\cite{DP15,DP16}, and by Letzter~\cite{L15} that show that $\ram{P_n} < 137 n$~\cite{DP15}, $\ram{P_n} < 91 n$~\cite{L15}, and $\ram{P_n} < 74 n$~\cite{DP16}.
On the other hand, the first nontrivial lower bound was provided by Beck~\cite{B90} and his result was subsequently improved by Bollob\'as~\cite{bollobas2} who showed that $\ram{P_n} \ge (1+\sqrt{2})n - O(1)$; today we know that $\ram{P_n} \ge 5n/2 - O(1)$~\cite{DP16}.

\smallskip

For any $c \in \R_+$, let $\cc$ be the family of cycles of length at most $cn$. In this paper, we continue to use similar ideas as in~\cite{DP15,L15,DP16} to deal with $\ram{\cc,P_n}$. Such techniques (very simple but quite powerful) were used for the first time in~\cite{Krivelevich1, Krivelevich2}; see also recent book~\cite{Krivelevich_book} that covers several tools including this one. Corresponding theorems use different approaches and different probability spaces that might be interesting on their own rights. Some non-trivial lower bounds are provided as well. In particular, it is shown that 
\[
2.00365 n \le \ram{\cco,P_n} \le 31n
\]
for sufficiently large~$n$.

\smallskip

We also study $\ram{C_n,P_n}$ and show that for even and sufficiently large $n$ we have
\[
5n/2 - O(1) \le \ram{C_n,P_n} \le 2257n.
\]
(In fact, the lower bound holds for odd values of $n$, too.) The linearity of $\ram{C_n,P_n}$ also follows from the earlier result of Haxell, Kohayakawa and \L{}uczak~\cite{HKL} who proved that the size-Ramsey number $\ram{C_n,C_n}$ is linear in $n$. However, their proof is based on the regularity method and therefore the leading constant is enormous.

\section{Preliminaries}

Let us recall a few classic models of random graphs that we study in this paper. The \emph{binomial random graph} $\G(n,p)$ is the random graph $G$ with vertex set $[n] := \{ 1, 2, \ldots, n\}$ in which every pair $\{i,j\} \in \binom{[n]}{2}$ appears independently as an edge in $G$ with probability~$p$. 
The \emph{binomial random bipartite graph} $\G(n,n, p)$ is the random bipartite graph $G=(V_1 \cup V_2, E)$ with partite sets $V_1, V_2$, each of order $n$, in which every pair $\{i,j\} \in V_1 \times V_2$ appears independently as an edge in $G$ with probability~$p$. 
Note that $p=p(n)$ may (and usually does) tend to zero as $n$ tends to infinity.  

Recall that an event in a probability space holds \emph{asymptotically almost surely} (or \emph{a.a.s.}) if the probability that it holds tends to $1$ as $n$ goes to infinity. Since we aim for results that hold a.a.s., we will always assume that $n$ is large enough. 

\smallskip

Another probability space that we are interested in is the probability space of random $d$-regular graphs with uniform probability distribution. This space is denoted $\mathcal{G}_{n,d}$, and asymptotics are for $n\to\infty$ with $d\ge 2$ fixed, and $n$ even if $d$ is odd.

Instead of working directly in the uniform probability space of random regular graphs on $n$ vertices $\mathcal{G}_{n,d}$, we use the \textit{pairing model} (also known as the \textit{configuration model}) of random regular graphs, first introduced by Bollob\'{a}s~\cite{bollobas1}, which is described next. Suppose that $dn$ is even, as in the case of random regular graphs, and consider $dn$ points partitioned into $n$ labelled buckets $v_1,v_2,\ldots,v_n$ of $d$ points each. A \textit{pairing} of these points is a perfect matching into $dn/2$ pairs. Given a pairing $P$, we may construct a multigraph $G(P)$, with loops allowed, as follows: the vertices are the buckets $v_1,v_2,\ldots, v_n$, and a pair $\{x,y\}$ in $P$ corresponds to an edge $v_iv_j$ in $G(P)$ if $x$ and $y$ are contained in the buckets $v_i$ and $v_j$, respectively. It is an easy fact that the probability of a random pairing corresponding to a given simple graph $G$ is independent of the graph, hence the restriction of the probability space of random pairings to simple graphs is precisely $\mathcal{G}_{n,d}$. Moreover, it is well known that a random pairing generates a simple graph with probability asymptotic to $e^{-(d^2-1)/4}$ depending on $d$, so that any event holding a.a.s.\ over the probability space of random pairings also holds a.a.s.\ over the corresponding space $\mathcal{G}_{n,d}$. For this reason, asymptotic results over random pairings suffice for our purposes. For more information on this model, see, for example, the survey of Wormald~\cite{NW-survey}.

\smallskip

Also, we will be using the following well-known concentration inequality. Let $X \in \textrm{Bin}(n,p)$ be a random variable with the binomial distribution with parameters $n$ and $p$. Then, a consequence of Chernoff's bound (see, for example, Theorem ~2.1 in~\cite{JLR}) is that for any $t \ge 0$
$$
\Prob( X \le \E X - t) \le \exp \left( - \E X \varphi \left( \frac {-t}{\E X} \right) \right) \le \exp \left( - \frac {t^2}{2\E X} \right),  
$$
where $\varphi(x) = (1+x) \log(1+x) - x$. 

\smallskip

For simplicity, we do not round numbers that are supposed to be integers either up or down; this is justified since these rounding errors are negligible to the asymptomatic calculations we will make. Finally, we use $\log n$ to denote natural logarithms.

\section{Upper bounds---first approach}

In this section, we will use the following observation.

\begin{lemma}\label{lem:determ_upperbound1}
Let $c \in \R_+$ and let $G$ be a graph of order $(c+1)n$. Suppose that for every two disjoint sets of vertices $S$ and $T$ such that $|S| = |T| = cn/2$ we have $e(S,T) \ge cn$. Then, $G\to (\cc,P_n)$.
\end{lemma}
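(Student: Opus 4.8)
The plan is to prove the contrapositive in the colouring sense: take any red/blue colouring of $E(G)$ and show that either the red graph contains a cycle from $\cc$ (i.e. a cycle of length at most $cn$) or the blue graph contains a path $P_n$ on $n$ vertices. So suppose the blue graph has no copy of $P_n$; I must then produce a short red cycle. The natural tool is the fact that a graph with no path on $n$ vertices has bounded structure: by the Erd\H{o}s--Gallai theorem a graph on $N$ vertices with no $P_n$ has at most $(n-2)N/2$ edges, and more usefully, if the blue graph has no $P_n$ then a longest blue path has at most $n-1$ vertices, which constrains where blue edges can go.

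\medskip

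First I would look at a longest path in the \emph{red} graph, or alternatively use the set/cut hypothesis directly. The key idea is to exploit $e(S,T) \ge cn$ for \emph{every} balanced pair of disjoint sets of size $cn/2$. Here is the approach I would carry out. Assume for contradiction that the red graph has no cycle of length at most $cn$, so every red cycle (if any) is long; in particular the red graph has large girth, and locally it looks like a forest on any $cn$ vertices. Simultaneously assume the blue graph has no $P_n$. The plan is to build a long path greedily in a DFS/BFS fashion and track the two colours: using the cut condition, whenever I have two disjoint "reservoirs" of size $cn/2$, there are at least $cn$ edges between them. If too many of these are red, I would argue that the red graph on these $cn$ vertices has average degree at least $2$, forcing a short red cycle (a graph on $m$ vertices with at least $m$ edges contains a cycle, and confining the vertex set to size $\le cn$ forces that cycle to have length $\le cn$). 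If instead most of these edges are blue, I would use them to extend or reconnect a blue path, eventually assembling a blue $P_n$.

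\medskip

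More concretely, the cleanest route: let $\ell$ be the number of vertices on a longest red path, set aside its two endpoint-neighbourhoods, and partition the remaining vertices to form the two sets $S$ and $T$ of size $cn/2$ to which the hypothesis applies. The $cn$ guaranteed edges between $S$ and $T$ are split by colour. I would show that if the red subgraph induced on $S \cup T$ (a vertex set of size $cn$) ever has at least $|S \cup T| = cn$ edges, then it contains a cycle, and since the whole vertex set has only $cn$ vertices that cycle has length at most $cn$, giving a red member of $\cc$. Otherwise the red graph on $S \cup T$ has fewer than $cn$ edges, so strictly more than $cn - cn = 0$ — in fact a positive linear fraction — of the $cn$ cross edges are blue; these blue edges, living between two large sets, let me repeatedly lengthen or splice blue paths via a rotation/extension argument until a blue $P_n$ emerges. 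Quantitatively, I would make the sets $S, T$ disjoint from a partial blue path of length $< n$ and use the blue cross-edges to push its length up to $n$, contradicting the assumption that no blue $P_n$ exists.

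\medskip

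The main obstacle I anticipate is the blue-path extension step: having many blue edges between two large sets does not by itself hand me a single long path, since those edges could form many short disjoint pieces. The technical heart will be organizing the greedy/connectivity argument so that the guaranteed density $e(S,T) \ge cn$ on \emph{every} balanced pair — not just one pair — can be reapplied as the path grows, ensuring at each stage either a reconnecting blue edge or a local red density exceeding the vertex count. Getting the bookkeeping right so that the reservoirs stay of size $cn/2$ and remain disjoint from the current path throughout the iteration, while the total vertex count stays at $(c+1)n$, is where the real care is needed; the two "win" conditions (short red cycle versus long blue path) must be shown to be exhaustive.
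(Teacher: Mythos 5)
There is a genuine gap, and you have actually pointed at it yourself: the ``blue-path extension step'' that you defer as the technical heart is precisely the step that does not work, and no amount of bookkeeping will fix it in the form you propose. If you choose $S$ and $T$ merely to be disjoint from a longest red path (or from a partial blue path), the $cn$ guaranteed cross edges split into red and blue in an uncontrolled way, and in the case where many of them are blue you are stuck: $cn$ blue edges between two sets of total size $cn$ have average degree $2$ and may consist of a perfect matching or a union of short disjoint cycles, which contains no long path at all; moreover, even if they formed longer pieces, you have no supply of blue edges connecting them to your partial blue path, so there is no way to ``splice.'' Rotation--extension (P\'osa-type) arguments need expansion of the \emph{blue} graph, which is exactly what the hypothesis does not give you. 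So your two ``win'' conditions are not exhaustive, and the case analysis collapses.

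The missing idea --- and the whole point of the paper's proof --- is to choose $S$ and $T$ so that \emph{by construction} there are no blue edges between them; then the colour split is trivial and the blue case never arises. Concretely: grow a blue path by depth-first search. Keep a set $S$ of unexplored vertices and a set $T$ of finished vertices; try to extend the blue path from its current endpoint into $S$, and when the endpoint has no blue neighbour in $S$, move it to $T$ and backtrack. The invariant is that no blue edge ever joins $S$ to $T$ (a vertex enters $T$ only after all its blue edges to $S$ have been ruled out). Since the blue path always has fewer than $n$ vertices and $|V(G)|=(c+1)n$, at some moment both $S$ and $T$ have size at least $cn/2$; trim them to exactly $cn/2$. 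The hypothesis gives $e(S,T)\ge cn$, all of these edges are red, and a bipartite graph on $cn$ vertices with at least $cn$ edges is not a forest, so it contains a (red) cycle, necessarily of length at most $cn$. Your proposal gestures at DFS early on and correctly identifies the final ``more edges than vertices forces a short cycle'' step, but without the DFS invariant that kills all blue cross edges, the argument as written cannot be completed.
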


\begin{proof}
Let $G=(V,E)$ be any graph of order $(c+1)n$ with the desired expansion properties. Consider any red-blue colouring of the edges of a graph, and suppose that there is no blue copy of~$P_n$. Our goal is to show that there must be a red cycle of length at most $cn$. 

We perform the following algorithm on $G$ to construct a blue path $P$. Let $v_1$ be an arbitrary vertex of $G$, let $P=(v_1)$, $S = V \setminus \{v_1\}$, and $T = \emptyset$. We investigate all edges from $v_1$ to $S$ searching for a blue edge. If such an edge is found (say from $v_1$ to $v_2$), we extend the blue path as $P=(v_1,v_2)$ and remove $v_2$ from $S$. We continue extending the blue path $P$ this way for as long as possible. Since there is no monochromatic $P_n$, we must reach a point of the process in which $P$ cannot be extended, that is, there is a blue path from $v_1$ to $v_k$ ($k < n$) and there is no blue edge from $v_k$ to $S$. This time, $v_k$ is moved to $T$ and we try to continue extending the path from $v_{k-1}$, reaching another critical point in which another vertex will be moved to $T$, etc. If $P$ is reduced to a single vertex $v_1$ and no blue edge to $S$ is found, we move $v_1$ to $T$ and simply restart the process from another vertex from $S$, again arbitrarily chosen. 

An obvious but important observation is that during this algorithm there is never a blue edge between $S$ and $T$. Moreover, in each step of the process, the size of $S$ decreases by 1 or the size of $T$ increases by 1. Finally, since there is no monochromatic $P_n$, the number of vertices of the blue path $P$ is always smaller than $n$. Hence, at some point of the process both $S$ and $T$ must have size at least $cn/2$. Now, if needed, we remove some vertices from $S$ or $T$ so that both sets have size precisely $cn/2$. Finally, it follows from the expansion property of $G$ that $e(S,T) \ge cn$. Thus, $G[S\cup T]$ (the graph induced by $G \cup T$), which consists only of red edges, is not a forest and so must contain a red cycle. The proof is finished.
\end{proof}

Since random graphs are good expanders, after carefully selecting parameters, they should arrow the desired graphs and so they should provide some upper bounds for the corresponding size Ramsey numbers. Let us start with investigating binomial random graphs. 

\begin{lemma}\label{lem:random1}
Let $c \in \R_+$ and let $d = d(c) > 4/c$ is such that 
$$
(c+1)\log(c+1) + c \log(d/2) - c^2d/4 + c = 0.
$$ 
Then, the following two properties hold a.a.s.\ for $G \in \G((c+1)n,d/n)$:
\begin{enumerate}[(i)]
\item for every two disjoint sets of vertices $S$ and $T$ such that $|S| = |T| = cn/2$ we have $e(S,T) > cn$;
\item $|E(G)| \sim d(c+1)^2 n / 2$.
\end{enumerate}
\end{lemma}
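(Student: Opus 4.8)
The plan is to prove both parts by direct application of the Chernoff bound together with a union bound over all relevant pairs of sets. Since the two statements are essentially independent, I would handle them separately, with part (ii) being the easier warm-up.

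For part (ii), the number of edges $|E(G)|$ is a sum of independent indicators over the $\binom{(c+1)n}{2}$ possible pairs, so it has a binomial distribution with mean $\binom{(c+1)n}{2} \cdot d/n \sim d(c+1)^2 n /2$. Since this mean grows linearly in $n$, a single application of the Chernoff bound quoted in the preliminaries shows that $|E(G)|$ is concentrated around its expectation: for any fixed $\delta > 0$, the probability that $|E(G)|$ deviates by more than a $\delta$-fraction from its mean is exponentially small in $n$, hence $|E(G)| \sim d(c+1)^2 n/2$ a.a.s.

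For part (i), I would fix two disjoint sets $S$ and $T$ with $|S| = |T| = cn/2$ and observe that $e(S,T) \in \mathrm{Bin}(|S|\,|T|, d/n)$ with mean $\E\, e(S,T) = (cn/2)^2 \cdot d/n = c^2 d n /4$. The assumption $d > 4/c$ is precisely what guarantees that this mean exceeds $cn$, so that $cn$ sits strictly below the expectation and the lower-tail Chernoff bound applies. Writing $t = c^2 dn/4 - cn > 0$, the bound gives
\[
\Prob(e(S,T) \le cn) \le \exp\left( - \frac{c^2 dn}{4} \, \varphi\!\left( \frac{-t}{c^2 dn/4} \right) \right).
\]
The main work is to show that this failure probability, multiplied by the number of choices of the pair $(S,T)$, still tends to zero. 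The number of such pairs is at most $\binom{(c+1)n}{cn/2}\binom{(c+1)n}{cn/2}$, which by the standard entropy estimate $\binom{N}{k} \le \exp(N H(k/N))$ (or Stirling) is $\exp(\Theta(n))$. Thus both the bound on a single pair and the count of pairs are of the form $\exp(\Theta(n))$, and the union bound succeeds precisely when the exponent coming from the Chernoff estimate dominates the entropy exponent from the counting.

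I expect the main obstacle to be verifying that the exponents balance correctly, and this is exactly where the defining equation for $d$ enters. Taking logarithms, the requirement is that
\[
2 \cdot \frac{cn}{2}\log\frac{(c+1)n}{cn/2} \;-\; \frac{c^2 dn}{4}\,\varphi\!\left( \frac{cn - c^2dn/4}{c^2 dn/4} \right) \;<\; 0,
\]
and after simplifying the entropy and $\varphi$ terms (dividing through by $n$) this reduces to the condition $(c+1)\log(c+1) + c\log(d/2) - c^2 d/4 + c = 0$ being the threshold; choosing $d$ to satisfy this equation with equality makes the combined exponent vanish, and any larger $d$ makes it strictly negative, so the union bound gives that the expansion property holds a.a.s. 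The delicate part is bookkeeping the entropy constant against the $\varphi$-term so that the algebra collapses exactly to the stated equation; once that identity is checked, the a.a.s.\ conclusion is immediate.
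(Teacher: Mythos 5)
Your overall strategy (Chernoff's lower tail for each fixed pair, then a union bound over pairs, with part (ii) a routine concentration statement) is exactly the paper's, and your treatment of part (ii) is fine. The genuine gap is in the step you explicitly defer --- ``once that identity is checked, the a.a.s.\ conclusion is immediate'' --- because with the estimates you propose that bookkeeping does not close. The defining equation for $d$ leaves \emph{zero} exponential slack: with the exact count of pairs, the total exponent is exactly $0$, not negative. Indeed, the number of ordered pairs of \emph{disjoint} sets is
$$
\binom{(c+1)n}{cn/2}\binom{(c+1)n-cn/2}{cn/2} = \frac{((c+1)n)!}{(cn/2)!^2\, n!} = \Theta(n^{-1}) \exp\Big( \big( (c+1)\log(c+1) - c\log(c/2) \big) n \Big),
$$
and adding the Chernoff exponent $c\log(cd/4) - c^2d/4 + c$ gives precisely $(c+1)\log(c+1) + c\log(d/2) - c^2 d/4 + c = 0$. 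So the union bound evaluates to $\Theta(n^{-1})\, e^{0\cdot n}$, and the conclusion $o(1)$ comes entirely from the polynomial prefactor supplied by Stirling's formula. Your claim that ``the union bound succeeds precisely when the exponent coming from the Chernoff estimate dominates the entropy exponent'' is therefore not what happens here, and your remark that ``any larger $d$ makes it strictly negative'' does not rescue the argument, since the lemma fixes $d$ by the equality.

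Worse, both counting estimates you offer actually break the argument, because there is no slack to absorb any loss. Ignoring disjointness and using $\binom{(c+1)n}{cn/2}^2$ inflates the count by a factor $\exp\big( \big( (c+1)\log(c+1) - (c+2)\log(\tfrac{c+2}{2}) \big) n \big)$, whose exponent is strictly positive for every $c>0$ (it vanishes at $c=0$ and has derivative $\log\tfrac{2(c+1)}{c+2}>0$); the total exponent then equals this positive quantity and the union bound fails. Meanwhile, the quantity in your displayed inequality, $2\cdot\tfrac{cn}{2}\log\tfrac{(c+1)n}{cn/2}$, is not even a valid upper bound on the logarithm of the count, since $\binom{N}{k}\ge (N/k)^k$; repairing it to $\binom{N}{k}\le (eN/k)^k$ adds $cn$ to the exponent and leaves a total exponent of $c - \log(c+1) > 0$, again positive. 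To make the proof work you must (a) count pairs of disjoint sets exactly, as $\frac{((c+1)n)!}{(cn/2)!^2\, n!}$, and (b) retain the $\Theta(n^{-1})$ Stirling prefactor, which is what actually drives the union bound to zero.
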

\begin{proof}
Let $S$ and $T$ with $|S| = |T| = cn/2$ be fixed and let $X=X_{S,T}=e(S,T)$.
Clearly, $\E X = c^{2}d n/4 > cn$ and by Chernoff's bound
\begin{eqnarray*}
\Pr(X \le cn) &\le& \exp\left( - \E X \left( \frac {cn}{\E X} \log \left( \frac {cn}{\E X} \right) + \frac {\E X - cn}{\E X} \right) \right) \\
&=& \exp \Big( ( c \log (cd/4) - c^2 d/4 + c ) n \Big).
\end{eqnarray*}
Thus, by the union bound over all choices of $S$ and $T$ we have
\begin{align*}
\Pr\left( \bigcup_{S,T} ( X_{S,T} \le cn ) \right) &\le
\binom{(c+1)n}{cn/2} \binom{(c+1)n-cn/2}{cn/2} \exp \Big( ( c \log (cd/4) - c^2 d/4 + c ) n \Big) \\
&= \frac{((c+1)n)!}{(cn/2)! (cn/2)! n!} \exp \Big( ( c \log (cd/4) - c^2 d/4 + c ) n \Big).
\end{align*}
Using Stirling's formula ($x! \sim \sqrt{2\pi x} (x/e)^x$) we get
\begin{align*}
\Pr & \left( \bigcup_{S,T} (X_{S,T} \le cn) \right) \\
& = o \left(\exp \Big( \left( (c+1)\log (c+1) - c \log(c/2) + c \log(cd/4) - c^{2}d/4 + c \right) n \Big) \right) \\
& = o \left(\exp \Big( \left( (c+1)\log (c+1) + c \log(d/2) - c^{2}d/4 + c \right) n \Big) \right) = o(1),
\end{align*}
by the definition of $d$. Part (i) follows from the first moment method.

Part (ii) follows immediately from Chernoff's bound as the expected number of edges in $\G((c+1)n,d/n)$ is $\binom{(c+1)n}{2} d/n \sim d(c+1)^2n/2$.
\end{proof}

Getting numerical upper bounds for size Ramsey numbers is a straightforward implication of the previous two lemmas. We get the following result.

\begin{theorem}\label{thm:random1}
Let $c \in \R_+$ and let $d = d(c) > 4/c$ is such that 
$$
(c+1)\log(c+1) + c \log(d/2) - c^2d/4 + c = 0.
$$ 
Then, a.a.s.\ $G \in \G((c+1)n,d/n) \to (\cc,P_n)$. As a result, for any $\eps > 0$ and sufficiently large $n$, 
$$
\ram{\cc,P_n} < U_1 = U_1(c) := \left( \frac {d(c+1)^2}{2} + \eps \right) n.
$$
In particular, for sufficiently large $n$, it follows that $\ram{\cco,P_n} < 37 n$ and so by monotonicity $\ram{\cc,P_n} < 37 n$ for any $c \ge 1$. Moreover, for $c \in (0,1]$ we have 
$$
\ram{\cc,P_n} < U_2 = U_2(c) := \frac {80 \log(e/c)}{c} \ n.
$$
\end{theorem}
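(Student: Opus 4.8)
The plan is to combine the two preceding lemmas. For the arrowing statement, I would observe that Lemma~\ref{lem:random1}(i) guarantees, a.a.s., that $e(S,T) > cn$ (hence $e(S,T) \ge cn$) for every pair of disjoint sets $S,T$ with $|S|=|T|=cn/2$ in $G \in \G((c+1)n,d/n)$. Since $G$ has order $(c+1)n$, this is exactly the expansion hypothesis of Lemma~\ref{lem:determ_upperbound1}, so a.a.s.\ $G \to (\cc,P_n)$. For the numerical consequence, Lemma~\ref{lem:random1}(ii) gives a.a.s.\ $|E(G)| \sim d(c+1)^2 n/2$, hence a.a.s.\ $|E(G)| < (d(c+1)^2/2 + \eps)n = U_1$. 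As both events hold a.a.s., a union bound shows that with probability tending to $1$ there exists a graph $G$ with $G \to (\cc,P_n)$ and $|E(G)| < U_1$ simultaneously; such a $G$ therefore exists for large $n$, which witnesses $\ram{\cc,P_n} \le |E(G)| < U_1$.

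The engine for the two explicit bounds is a single monotonicity observation. Writing $f(d)$ for the left-hand side of the defining equation, we have $f'(d) = c/d - c^2/4$, which vanishes at $d = 4/c$ and is strictly negative for $d > 4/c$. Thus $f$ is strictly decreasing on $(4/c,\infty)$, so the root $d=d(c)$ there is unique and, crucially, for any test value $d^* > 4/c$ with $f(d^*) \le 0 = f(d(c))$ the monotonicity forces $d(c) \le d^*$. This reduces the problem of bounding the implicitly defined $d(c)$ to evaluating $f$ at one convenient point.

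For $c=1$ I would locate the unique root $d>4$ of $2\log 2 + \log(d/2) - d/4 + 1 = 0$ numerically, obtaining $d \approx 18.4$, so that $U_1 = (2d+\eps)n < 37n$ for small $\eps$; this proves $\ram{\cco,P_n} < 37n$. Monotonicity in the family is then immediate: for $c \ge 1$ we have $\cco \subseteq \cc$, since a red cycle of length at most $n$ is in particular a red cycle of length at most $cn$. Hence $G \to (\cco,P_n)$ implies $G \to (\cc,P_n)$, giving $\ram{\cc,P_n} \le \ram{\cco,P_n} < 37n$.

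For $c \in (0,1]$ the aim is to replace $d(c)$ by the clean bound $U_2$, and this is the only genuinely technical step. I would apply the monotonicity fact with the test value $d^* = 40\log(e/c)/c > 4/c$. Writing $L := \log(e/c) = 1 - \log c \ge 1$, one computes $c^2 d^*/4 = 10cL$ and $c\log(d^*/2) = c(\log 20 + \log L + L - 1)$, so that $f(d^*) = (c+1)\log(c+1) + c\log 20 + c\log L - 9cL$. Using the elementary estimates $(c+1)\log(c+1) \le 2c$ and $\log L \le L$ yields $f(d^*) \le c(2 + \log 20 - 8L) < 0$, since $L \ge 1$ and $2 + \log 20 < 8$. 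Therefore $d(c) < d^* = 40\log(e/c)/c$, and because $(c+1)^2 \le 4$ for $c \le 1$ we obtain $d(c)(c+1)^2/2 < 2d^* = 80\log(e/c)/c$. Choosing $\eps$ small enough that $d(c)(c+1)^2/2 + \eps < 80\log(e/c)/c$ gives $\ram{\cc,P_n} < U_1 < U_2$, as required. The main obstacle is precisely this last inequality: the equation defines $d(c)$ only implicitly, and the work lies in picking a test value of the correct order $\Theta(c^{-1}\log(1/c))$ and discharging the resulting one-variable logarithmic inequality, whereas everything else follows routinely from the two lemmas.
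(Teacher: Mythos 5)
Your proposal is correct and follows essentially the same route as the paper: combine Lemma~\ref{lem:determ_upperbound1} with Lemma~\ref{lem:random1} for the arrowing and edge count, use monotonicity of the families for $c\ge 1$, and establish $U_2$ by testing the same value $\hat{d}=40\log(e/c)/c$ in the defining equation and verifying the resulting logarithmic inequality (your intermediate estimates differ cosmetically from the paper's, which bounds $\hat d\le 40/c^2$ and compares $20/c^2$ with $e^3/c^3$). Your explicit verification that $f$ is strictly decreasing in $d$ on $(4/c,\infty)$ — which is what lets one conclude $d(c)<\hat d$ from $f(c,\hat d)<0$ — is a point the paper leaves implicit, and is a welcome addition rather than a deviation.
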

\begin{proof}
The first part is an immediate consequence of Lemma~\ref{lem:determ_upperbound1} and Lemma~\ref{lem:random1}. In order to get an explicit upper bound $U_2=U_2(c)$, we will show that $d = d(c) < \hat{d} = \hat{d} := 40 \log(e/c) / c$. From this the result will follow since $d(c+1)^2/2 \le 2d < 2\hat{d} = 80 \log(e/c) / c$. (Observe that this bound is of the right order as it is easy to see that $d = \Omega(\log(e/c)/c)$ as $c \to 0$.)

Note that 
\begin{eqnarray*}
f(c,\hat{d}) &=& (c+1)\log(c+1) + c \log(\hat{d}/2) - c^2\hat{d}/4 + c \\
&\le & (c+1) c + c \log(20 / c^2) - 10 c \log(e/c) + c,
\end{eqnarray*}
as for $c \in (0,1]$ we have $\log (c+1) \le c$ and $\log(e/c) \le 1/c$ (and so $\hat{d} \le 40/c^2$). Now, since clearly $\log(e/c) \ge 1$,
\begin{eqnarray*}
f(c,\hat{d}) &\le& 3 c \log(e/c) + c \log(20 / c^2) - 10 c \log(e/c) \\
&<& 3 c \log(e/c) + c \log(e^3 / c^3) - 10 c \log(e/c) = -4 c \log(e/c) < 0.
\end{eqnarray*}
It follows that $d < \hat{d}$, and the proof is complete.
\end{proof}

Some numerical values are calculated in Table~\ref{tab:U1} and presented on Figure~\ref{fig:U1}(a).

\begin{table}[h]
\begin{center}
  \begin{tabular}{|c||c|c|c|c|c|c|c|c|c|c|c|c}
    \hline
    $c$ & $1.0$ & $0.9$ & $0.8$ & $0.7$ & $0.6$ & $0.5$ & $0.4$ & $0.3$ & $0.2$ & $0.1$ \\ \hline \hline
    $d(c)$ & 18.43 & 20.9 & 24.05 & 28.20 & 33.89 & 42.11 & 54.91 & 77.21 & 124.51 & 279.54 \\
    $U_1(c)$ & 37 & 38 & 39 & 41 & 44 & 48 & 54 & 66 & 90 & 170\\
    $U_2(c)$ & 80 & 99 & 123 & 156 & 202 & 271 & 384 & 588 & 1044 & 2643\\
    \hline
  \end{tabular}
\caption{Numerical upper bounds $U_1(c)$ and explicit ones $U_2(c)$ for $\ram{\cc,P_n}$.\label{tab:U1}}
\end{center}
\end{table}

\begin{figure}[htbp]
\begin{tabular}{ccc}
\includegraphics[width=0.3\textwidth]{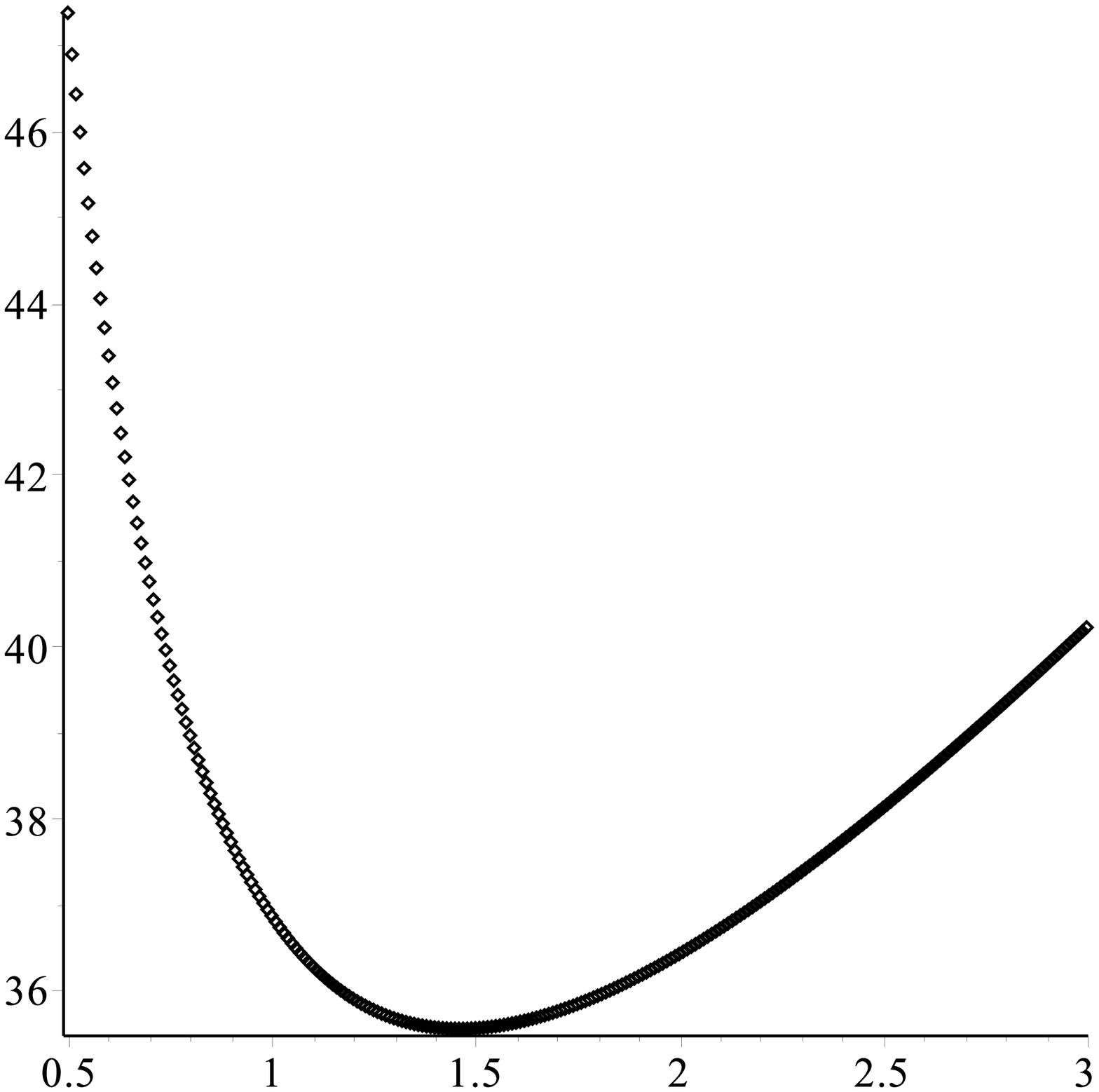} & \quad \quad \quad &
\includegraphics[width=0.3\textwidth]{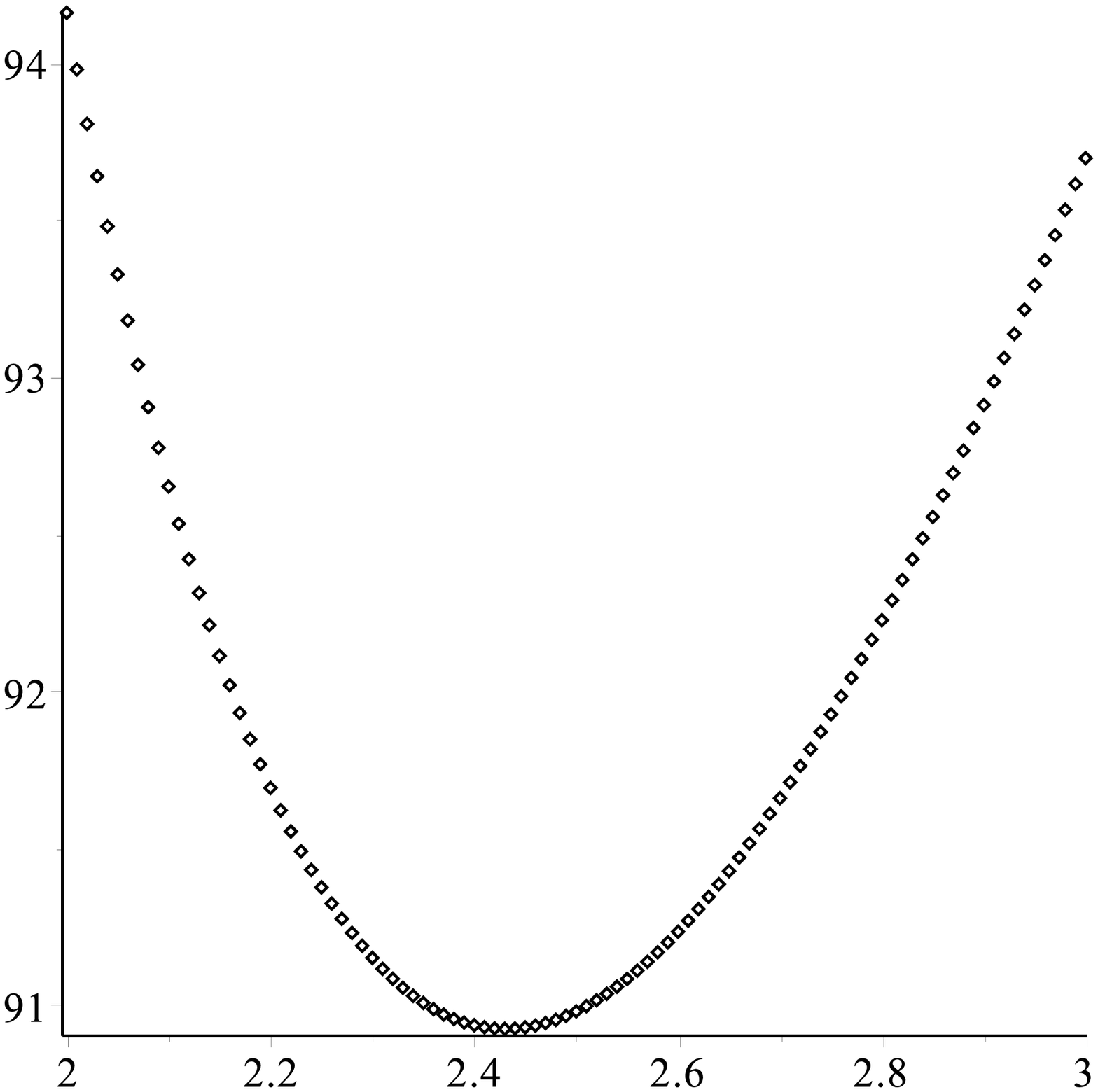} \\
(a) & & (b) \\
\end{tabular}
\caption{Numerical upper bounds for $\ram{\cc,P_n}$: $U_1(c)$ (a) and $U_3(c)$ (from Theorem~\ref{thm:random2}) (b).\label{fig:U1}}
\end{figure}

Now, let us investigate random $d$-regular graphs. For simplicity we focus on the $c=1$ case. Not surprisingly, this model yields slightly better upper bound for the size Ramsey numbers. 

\begin{theorem}\label{thm:d-reg1}
A.a.s.\ $\G_{2n,31} \to (\cco,P_n)$, which implies that $\ram{\cco,P_n} \le 31 n$ for sufficiently large $n$.
\end{theorem}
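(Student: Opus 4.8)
The plan is to apply Lemma~\ref{lem:determ_upperbound1} with $c=1$, for which it suffices to show that a.a.s.\ $\G_{2n,31}$ enjoys the expansion property that every pair of disjoint sets $S,T$ with $|S|=|T|=n/2$ satisfies $e(S,T)\ge n$. Since $\G_{2n,31}$ has exactly $31n$ edges, this a.a.s.\ statement immediately gives $\ram{\cco,P_n}\le 31n$. As usual I would work in the pairing (configuration) model on $N=62n$ points, grouped into the $a=31n/2$ points lying in buckets of $S$, the $b=31n/2$ points in buckets of $T$, and the remaining $r=31n$ points; an a.a.s.\ statement proved there transfers to $\G_{2n,31}$ because a random pairing is simple with probability bounded away from~$0$.

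The core is a first-moment estimate: fix disjoint $S,T$ of size $n/2$ and bound $\Pr(e(S,T)<n)$, then sum over all choices. For the probability I would count pairings exactly according to their type, i.e.\ by the numbers $x_{ST},x_{SS},x_{TT},x_{SR},x_{TR},x_{RR}$ of pairs of each kind; the number of matchings with a prescribed type is a product of factorials, and dividing by $(N-1)!!$ and applying Stirling yields the exponential rate. Fixing $x_{ST}=j$ and optimizing the internal structure subject to the degree constraints (by symmetry $x_{SS}=x_{TT}$ and $x_{SR}=x_{TR}$ at the optimum), the maximizing configuration satisfies the quasi-independence relation $x_{SR}^2=4\,x_{SS}\,x_{RR}$. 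Since the mean of $e(S,T)$ is $\tfrac{31}{8}n\approx 3.875n$, the lower tail $\{e(S,T)<n\}$ is governed by $j=n$, and carrying out the Stirling bookkeeping at $j=n$ gives $\Pr(e(S,T)<n)\le \exp(-(2.10+o(1))n)$.

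It then remains to beat the union bound. The number of ordered pairs $(S,T)$ is at most $\binom{2n}{n/2}\binom{3n/2}{n/2}=\exp((2.08+o(1))n)$, so the expected number of bad pairs with $e(S,T)<n$ is at most $\exp(-(0.02+o(1))n)=o(1)$, and the first-moment method finishes the argument, after which the transfer to $\G_{2n,31}$ is routine.

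The main obstacle is that this margin is extremely thin: the tail rate $\approx 2.10$ barely exceeds the entropy $\approx 2.08$ of the union bound, which is exactly why $d=31$ (and no smaller value) is used. In particular, any lossy estimate of $\Pr(e(S,T)<n)$ fails; for instance, the naive sequential exposure of the $S$-points, which only lower-bounds each conditional crossing probability by $\tfrac{29}{124}$ and hence yields a rate near $1.6n$, is far too weak. Thus the delicate point is to evaluate the pairing-model lower tail sharply, via the exact type count and the quasi-independence optimum, rather than to bound it crudely; everything else is standard.
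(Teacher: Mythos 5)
Your proposal is correct and takes essentially the same route as the paper: both apply Lemma~\ref{lem:determ_upperbound1} with $c=1$ and verify the expansion property of $\G_{2n,31}$ by a first-moment/union-bound calculation carried out exactly in the pairing model via Stirling's formula, with the same razor-thin numerical margin you identify (the paper's optimized exponent is $f(a,b,31)\le -0.02$, i.e.\ precisely your tail rate $\approx 2.10$ against the entropy $3\log 2\approx 2.08$ of choosing $(S,T)$). The only difference is bookkeeping: the paper parametrizes configurations by just $x_{ST}=an$ and $x_{SR}=bn$, absorbing the remaining pair types into perfect-matching counts $M(\cdot)$ and optimizing over $b$ analytically before a numerical check in $a$, whereas you carry the full six-type decomposition with the symmetry and quasi-independence conditions $x_{SS}=x_{TT}$, $x_{SR}=x_{TR}$, $x_{SR}^2=4x_{SS}x_{RR}$ at the optimum --- an equivalent computation.
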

\begin{proof}
Consider $\mathcal{G}_{2n,d}$ for some $d \in \N$. Our goal is to show that (for a suitable choice of $d$) the expected number of pairs of two disjoint sets, $S$ and $T$, such that $|S|=|T|=n/2$ and $e(S,T) < n$ tends to zero as $n \to \infty$. This, together with the first moment principle, implies that a.a.s.\ no such pair exists and so, by Lemma~\ref{lem:determ_upperbound1}, we get that a.a.s.\ $\mathcal{G}_{2n,d} \to (\cc,P_n)$. As a result, $\ram{\cc, P_n} \le dn$. 

Let $a=a(n)$ be any function of $n$ such that $an \in \Z$ and $0\le a\le 1$ and $b=b(n)$ be any function of $n$ such that $bn \in \Z$ and $0 \le b \le d/2-a$. Let $X(a,b)$ be the expected number of pairs of two disjoint sets $S, T$ such that $|S|=|T|=n/2$, $e(S,T) = an$, and $e(S,V \setminus (S \cup T)) = bn$. Using the pairing model, it is clear that 
\begin{eqnarray*}
X(a) &=& {2n \choose n} {n \choose n/2} {dn/2 \choose an}^2 (an)! {dn/2-an \choose bn} {dn \choose bn} (bn)! \\
&& \quad \cdot \ M(dn/2-an-bn) \cdot M \Big( (dn/2-an) + (dn-bn) \Big) \Big/ M(2dn),
\end{eqnarray*}
where $M(i)$ is the number of perfect matchings on $i$ vertices, that is, 
$$
M(i) = \frac {i!} {(i/2)! 2^{i/2}}.
$$
(Each time we deal with perfect matchings, $i$ is assumed to be an even number.) After simplification we get
\begin{eqnarray*}
X(a,b) &=& (2n)! (dn/2)!^2 (dn)! (3dn/2-an-bn)! (dn)! 2^{dn} \\
&& \quad \cdot \ \Bigg[ n! (n/2)!^2 (an)! (dn/2-an)! (bn)! (dn-bn)! (dn/4-an/2-bn/2)! \\
&& \quad \quad \quad 2^{dn/4-an/2-bn/2}  (3dn/4-an/2-bn/2)! 2^{3dn/4-an/2-bn/2} (2dn)! \Bigg]^{-1}.
\end{eqnarray*}
Using Stirling's formula ($i! \sim \sqrt{2\pi i} (i/e)^i$) and focusing on the exponential part we obtain
$$
X(a,b) = \Theta( n^{-2} ) e^{f(a,b,d)n},
$$
where
\begin{eqnarray*}
f(a,b,d) &=& (3-3d+a+b) \log 2 + d \log d - a \log a - b \log b - (d/2-a) \log (d/2-a) \\
&& \quad - (d-b)\log(d-b) -(d/4-a/2-b/2) \log (d/4-a/2-b/2) \\
&& \quad - (3d/4-a/2-b/2) \log (3d/4-a/2-b/2) \\
&& \quad + (3d/2-a-b) \log (3d/2-a-b).
\end{eqnarray*}
Thus, if $f(a,b,d) \le -\eps$ (for some $\eps > 0$) for any pair of integers $an$ and $bn$ under consideration, then we would get $\sum_{an} \sum_{bn} X(a,b) = O(1) e^{-\eps n} = o(1)$ (as $an = O(n)$ and $bn = O(n)$). The desired property would be satisfied, and the proof would be finished.

It is straightforward to see that 
\begin{eqnarray*}
\frac{\partial f}{\partial b} &=& \log 2 - \log b + \log (d-b) + \log(d/4-a/2-b/2)/2 \\
&& \quad + \log(3d/4-a/2-b/2)/2 - \log(3d/2-a-b).
\end{eqnarray*}
Now, since $\frac{\partial f}{\partial b} = 0$ if and only if 
$$
b^2 - b(2d-2a) + d(d-2a)/2 = 0,
$$ 
function $f(a,b,d)$ has a local maximum for $b = b_0 := d-a-\sqrt{2d^2-4ad+4a^2}/2$, which is also a global one on $b\in(-\infty, d/2-a)$. (Observe that since $b \le d/2 - a$, $b_0 = d/2-a+d/2 - \sqrt{d^2 + (d-2a)^2}/2 \le d/2-a$.) Consequently,
$$
f(a,b,d) \le g(a,d) := f(a, b_0, d).
$$ 
Finally, let us fix $d_0 = 31$. It is easy to show that $g(a,d_0)$ is an increasing function of $a$ on the interval $0 \le a \le a_0=1$. Thus, we get $g(a,d_0) \le g(a_0,d_0) < -0.02 =: -\eps$ and the proof is finished. 
\end{proof}

\section{Upper bounds---second approach}

In this section, we will use another observation.

\begin{lemma}\label{lem:determ_upperbound2}
Let $c > 1$ and let $G$ be a graph of order $2(cn-1)$. Suppose that for every two disjoint sets of vertices $S$ and $T$ such that $|S| = |T| = ((c-1)n-1)/2$ we have $e(S,T) \neq 0$. Then, $G\to (\cc,P_n)$.
\end{lemma}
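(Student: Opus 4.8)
The plan is to imitate the greedy construction in the proof of Lemma~\ref{lem:determ_upperbound1}, but to extract a short red cycle from an \emph{expansion} property of a bipartite graph rather than from an edge count. Assume we are given a red--blue colouring of $G$ with no blue copy of $P_n$; the goal is to produce a red cycle of length at most $cn$. First I would run the blue-path algorithm from Lemma~\ref{lem:determ_upperbound1} (grow a blue path, move dead ends into $T$, restart when needed). Since there is no blue $P_n$, the current path always has fewer than $n$ vertices, and at every step $|S|$ decreases or $|T|$ increases by one, so the equal-size moment $|S|=|T|$ is reached. Because $|V(G)| = 2(cn-1)$ and the path has at most $n-1$ vertices, at that moment $|S| = |T| = m$ with $m \ge ((2c-1)n-1)/2 = (c-\tfrac12)n - \tfrac12$, and, exactly as in Lemma~\ref{lem:determ_upperbound1}, there is no blue edge between $S$ and $T$. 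I keep $S$ and $T$ at this size $m$ and let $H$ be the bipartite graph of all $G$-edges between them; since there are no blue crossing edges, every edge of $H$ is red.

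Next I would feed the hypothesis into $H$. Writing $k = ((c-1)n-1)/2$, note that $k \le m$, so for any $S' \subseteq S$ and $T' \subseteq T$ with $|S'| = |T'| = k$ the hypothesis supplies a $G$-edge between $S'$ and $T'$, which is necessarily red and hence an edge of $H$. Thus $H$ has no empty $k \times k$ bipartite ``rectangle''; equivalently, every $k$-subset of one side has more than $m-k$ neighbours on the other side. Since $m - k \approx \tfrac{c}{2}n$ while $k \approx \tfrac{c-1}{2}n$, this is a strong vertex-expansion at scale $k$, the expansion ratio $(m-k)/k = c/(c-1)$ being bounded away from $1$.

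The decisive step is then to show that this expansion forces $H$ to contain a cycle of length at most $cn$; such a cycle is red and finishes the proof. I would argue by contradiction: if every cycle of $H$ had length more than $cn$, then $H$ would have girth exceeding $cn$, and a graph on $2m$ vertices with such linear girth is a near-forest, with $e(H) \le (1+o(1))\,2m$ edges (Moore / Bondy--Simonovits). Selecting the $k$ vertices of smallest degree on one side, their total degree is at most $(k/m)\,e(H) \le (2+o(1))k$, so together they have at most $(2+o(1))k$ neighbours on the other side; as $m \ge 3k$ (equivalently $c \le 2$), at least $m - (2+o(1))k \ge k$ vertices remain untouched, producing an empty $k \times k$ rectangle and contradicting the expansion established above. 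Hence $H$ has a cycle of length at most $cn$.

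I expect the main obstacle to be exactly this last extremal step, that is, converting the scale-$k$ expansion into a genuinely \emph{short} cycle. The binding constraint is $m \ge 3k$, which is precisely where the orders $2(cn-1)$ and $((c-1)n-1)/2$ in the statement are tuned; it holds comfortably for $c$ up to about $2$, and for larger $c$ one needs a finer structural description of large-girth graphs (essentially disjoint unions of long paths with at most one long cycle per component, along which a block of $k$ consecutive vertices expands by only about $k$, again violating the expansion). The delicate part should be the bookkeeping of these constants rather than any conceptual difficulty, and the earlier steps follow the template of Lemma~\ref{lem:determ_upperbound1} closely.
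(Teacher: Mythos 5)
Your argument, as you anticipate, only goes through for $1 < c < 2$, and the way you propose to close the remaining case does not work; this is a genuine gap, not bookkeeping. The lemma is stated for every $c>1$, and the paper actually needs it beyond $2$: Theorem~\ref{thm:random2} is invoked with $c=2.5$ to get the bound $91n$ by monotonicity, and the random-regular discussion after it uses $c\approx 2.61$. Your reduction requires $m \ge (3+o(1))k$, where $m \approx (c-\tfrac12)n$ and $k \approx \tfrac{c-1}{2}n$, which is exactly the condition $c<2$ (and already fails at $c=2$ because of the $o(1)$ slack). For $c>2$ you appeal to a ``finer structural description of large-girth graphs'' as essentially disjoint unions of long paths plus at most one long cycle, but that premise is false: a graph with girth exceeding $cn$ is (up to $O(1)$ extra edges) an arbitrary \emph{forest}, and forest components can be stars, double stars, spiders, or trees with many high-degree vertices, for which ``a block of $k$ consecutive vertices expands by only about $k$'' is meaningless. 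What your approach really requires is the statement: every bipartite forest with parts of size $m$, $2k < m < 3k$, contains an empty $k\times k$ rectangle. Proving that for arbitrary trees (high-degree vertices force one to control how neighbourhoods overlap, not just count edges) is the actual hard core of your route for large $c$, and it is missing. A secondary, fixable point: the fact you need for $c<2$ --- girth linear in the number of vertices forces at most $(1+o(1))\cdot 2m$ edges --- is true, but it is not given by Moore or Bondy--Simonovits bounds (those concern fixed girth or fixed even cycle length); the correct tool is the classical Erd\H{o}s-type bound that a graph with $N$ vertices and $N+t$ edges contains a cycle of length $O((N/t)\log t)$.

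For contrast, the paper's proof sidesteps all of this and covers every $c>1$ in a few lines: given a colouring of $G$ with no red member of $\cc$ and no blue $P_n$, transfer it to $K_{2(cn-1)}$ (red edges of $G$ stay red, everything else green); since there is no red cycle of length at most $cn$, Theorem~\ref{thm:EFRS} (Erd\H{o}s--Faudree--Rousseau--Schelp, $R(\cc,K_n)=2n-1$) produces a green clique on $cn-1$ vertices, i.e.\ a set $U$ of $cn-1$ vertices of $G$ spanning \emph{no red edge}; running the blue-path algorithm inside $G[U]$ then yields two sets of size $((c-1)n-1)/2$ with no blue edge between them, hence no $G$-edge at all, contradicting the hypothesis. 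Note the structural difference: the paper arranges matters so that the final pair $(S,T)$ has $e_G(S,T)=0$ outright, whereas you only get ``all crossing edges are red'' and must then extract a short red cycle, which is where the difficulty concentrates. Your self-contained argument (it avoids the EFRS Ramsey theorem entirely) would be a legitimate alternative proof of a weaker lemma restricted to $1<c<2$, but to prove the statement as given you need the paper's idea or a new one for the forest claim above.
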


In order to prove the lemma, we will need the following result of Erd\H{o}s, Faudree, Rousseau, and Schelp~\cite{EFRS}.
Before we state the result, we need to recall a classic counterpart of the size-Ramsey numbers. The \emph{Ramsey number} of the pair $(\ff,H)$ is defined as 
$$
R(\ff,H) = \min \{ n \in \N : K_n \to (\ff,H) \}.
$$ 
Now, we are ready to state the theorem and then prove Lemma~\ref{lem:determ_upperbound2}.

\begin{theorem}[\cite{EFRS}]\label{thm:EFRS}
For all $n \ge 2$ and $c \in (1,\infty)$,
$$
R(\cc,K_n) = 
\begin{cases}
2n-1 & \text{ if } cn \ge 2n-1,\\
2n & \text{ if } n < cn < 2n-1.
\end{cases}
$$
\end{theorem}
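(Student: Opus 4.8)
The plan is to recast the arrowing statement as an extremal problem about the graph of red edges. Fix a red/blue colouring of $E(K_N)$ and let $G$ be its red graph. A red cycle of length at most $cn$ is precisely a cycle of length $\le cn$ in $G$, and a blue $K_n$ is precisely an independent set of size $n$ in $G$. Thus $K_N \to (\cc,K_n)$ if and only if every graph $G$ on $N$ vertices with $\alpha(G) \le n-1$ contains a cycle of length at most $cn$. Deleting a vertex neither increases $\alpha$ nor destroys all cycles, so this property is monotone in $N$ and the threshold $R(\cc,K_n)$ is well defined. It therefore suffices, for the claimed value $R$, to (a) construct a graph on $R-1$ vertices with $\alpha \le n-1$ and no cycle of length $\le cn$, and (b) prove that every graph on $R$ vertices with $\alpha \le n-1$ has one.

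Suppose first $cn \ge 2n-1$. For (b) with $R=2n-1$, let $\alpha(G)\le n-1$ on $2n-1$ vertices. A forest on $2n-1$ vertices is bipartite, and its larger side is independent of size at least $\lceil (2n-1)/2 \rceil = n$; hence $G$ is not a forest and contains a cycle, of length at most $2n-1 \le cn$. For (a), the matching $(n-1)K_2$ on $2n-2$ vertices has $\alpha = n-1$ and no cycle, so $K_{2n-2}\not\to(\cc,K_n)$. This gives $R(\cc,K_n)=2n-1$.

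Now suppose $n < cn < 2n-1$, so $R=2n$. The lower bound (a) is witnessed by $C_{2n-1}$: it has $\alpha = n-1$ and its unique cycle has length $2n-1 > cn$. For the upper bound (b) I must show that $\alpha(G)\le n-1$ on $2n$ vertices forces a cycle of length $\le cn$. Since $cn>n$ and lengths are integers, it suffices to find a cycle of length $\le n$, so I will prove the contrapositive: a graph $G$ on $2n$ vertices with girth $>n$ (a forest being allowed) satisfies $\alpha(G)\ge n$. Two reductions are immediate. If some vertex $v$ has $\deg(v)\ge n$ then, since girth $>n\ge 3$ forbids triangles, $N(v)$ is independent of size $\ge n$ and we are done; so assume $\Delta(G)\le n-1$. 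Moreover a cycle of length $>n$ uses more than $n$ vertices, so at most one component of $G$ contains a cycle; every other component is a tree and contributes at least half of its vertices to an independent set. Hence it remains to show that a single connected graph $H$ with girth $>n$ satisfies $\alpha(H)\ge \lfloor |V(H)|/2 \rfloor$; summing this over the (at most one) cyclic component together with the $\lceil \cdot/2\rceil$ bound over the tree components yields $\alpha(G)\ge n$, the parity bookkeeping on the even order $2n$ matching the slack exactly (and $C_{2n-1}$ shows the bound is tight at order $2n-1$).

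This last step is the main obstacle, and the subtle point is that sparsity alone does not suffice: a graph of girth $>n$ on $2n$ vertices may have only about $2n$ edges, so the Caro--Wei inequality gives merely $\alpha \ge 2n/3$, well short of $n$; the cyclic, near-bipartite structure must be exploited. The route I would follow is structural. By the Moore bound, minimum degree at least $3$ together with girth $>n$ forces exponentially many (in $n$) vertices; since deleting vertices cannot reduce the girth, for large $n$ the $3$-core of $H$ is empty, i.e.\ $H$ is $2$-degenerate. Thus $H$ is a subdivision of a multigraph on its branch vertices (those of degree $\ge 3$), and every cycle there subdivides to a cycle of $H$ of length $>n$; a short calculation—each additional independent cycle consuming order $n$ vertices—bounds the cycle rank and the number of branch vertices, so $H$ is a single long cycle, or a small theta-like graph, with pendant trees attached. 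For each of these structured graphs one assembles the required independent set directly: an even cycle is bipartite, an induced cycle of length $\ell$ contributes $\lfloor \ell/2 \rfloor$, and the attached trees contribute at least half of their vertices. Carrying out this accounting cleanly—bounding the branch structure and combining the contributions across the cycle and the pendant trees—is where the real work lies, and it yields $\alpha(H)\ge \lfloor |V(H)|/2\rfloor$ and hence $R(\cc,K_n)=2n$.
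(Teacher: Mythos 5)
Your Case~1 is correct, and so are both lower-bound constructions ($(n-1)K_2$ and $C_{2n-1}$), as are the preliminary reductions in Case~2 (the $\Delta\ge n$ step, the observation that at most one component is cyclic, and the tree bookkeeping). But the pivotal lemma on which everything then rests---that every connected graph $H$ on at most $2n$ vertices with girth greater than $n$ satisfies $\alpha(H)\ge\lfloor |V(H)|/2\rfloor$---is \emph{false}. Take $K_4$ and subdivide each of its six edges into a path with $\ell$ edges, where $\ell=(n+1)/3$ is odd (so $n\equiv 2 \pmod 6$; concretely $n=8$, $\ell=3$). The resulting graph $H$ has $6\ell-2=2n$ vertices and girth $3\ell=n+1>n$, yet $\alpha(H)=n-1$: subdividing a single edge twice increases the independence number by exactly one, so iterating from $\alpha(K_4)=1$ gives $\alpha(H)=1+6\cdot(\ell-1)/2=3\ell-2=n-1$. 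For $n=8$ this is a $16$-vertex graph of girth $9$ with $\alpha=7$, which you can also verify directly by a per-segment counting argument. So your planned accounting over ``a single long cycle or a small theta-like graph with pendant trees''---the step you yourself flagged as where the real work lies---cannot be completed: the subdivided $K_4$ is exactly the theta-like configuration that violates the target inequality, and Caro--Wei-type or bipartite-per-piece bounds top out at $n-1$ for it.

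The failure is sharper than a gap in one lemma: the same graph refutes the statement in the literal reading you adopted. With $c$ chosen so that $n<cn<n+1$ (say $n=8$, $cn=8.5$), colour the edges of $H$ red and all remaining edges of $K_{2n}$ blue; there is no red cycle of length at most $cn$ (girth is $n+1$) and no blue $K_n$ (since $\alpha(H)=n-1$), so $K_{2n}\not\to(\cc,K_n)$. Your reduction ``since $cn>n$ it suffices to find a cycle of length at most $n$'' therefore targets precisely the regime in which the claim is false; the theorem must be read with the family containing $C_{n+1}$, i.e.\ effectively $cn\ge n+1$, which is what the paper actually uses (in the proof of Lemma~\ref{lem:determ_upperbound2} the family consists of cycles of length up to $m+1$ against $K_m$ with $m=cn-1$), and in that regime the subdivided $K_4$, having girth exactly $n+1$, is the extremal configuration any correct proof must confront rather than avoid. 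Note also that the paper supplies no proof of this statement---it is quoted from Erd\H{o}s, Faudree, Rousseau and Schelp---so there is no internal argument to compare against; judged on its own merits, your attempt is irreparable as structured, since the statement you reduced to is not true.
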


\begin{proof}[Proof of Lemma~\ref{lem:determ_upperbound2}]
Suppose that there exists a colouring of the edges of a graph $G$ of order $2(cn-1)$ with neither red cycle on at most $cn$ vertices nor blue path on $n$ vertices. This colouring yields a colouring of the edges of $K_{2(cn-1)}$: edges of $K_{2(cn-1)}$ corresponding to the red edges of $G$ stay red; remaining edges of $K_{2(cn-1)}$ are, say, green. As such colouring does not create any red cycle on at most $cn$ vertices, it follows from Theorem~\ref{thm:EFRS} that it must create a green $K_{cn-1}$. Coming back to graph $G$ and its original red-blue colouring, we get that there exists a set $U$ consisting of $cn-1$ vertices in $G$ that induces a graph with no red edge. Performing the algorithm used in the proof of Lemma~\ref{lem:determ_upperbound1} on $G[U]$ we get that there are two disjoint sets of vertices $S$ and $T$ such that $|S| = |T| = ((c-1)n-1)/2$ with no blue edge between $S$ and $T$. As a result, $e(S,T) = 0$ and the proof is finished.
\end{proof}

As usual, let us first see how the observation works for binomial random graphs.

\begin{lemma}\label{lem:random2}
Let $c > 1$ and let 
$$
d = d(c) := \frac {4}{(c-1)^2} \left( (2c) \log (2c) - (c-1) \log \left( \frac {c-1}{2} \right) - (c+1) \log (c+1) \right) .
$$
Then, the following two properties hold a.a.s.\ for $G \in \G(2(cn-1),d/n)$:
\begin{enumerate}[(i)]
\item for every two disjoint sets of vertices $S$ and $T$ such that $|S| = |T| = ((c-1)n-1)/2$ we have $e(S,T) \neq 0$;
\item $|E(G)| \sim 2dc^2 n$.
\end{enumerate}
\end{lemma}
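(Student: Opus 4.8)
The plan is to prove the two parts separately, with part (ii) following routinely from Chernoff's bound and part (i) being the substantive claim established by a first moment computation, closely mirroring the structure of the proof of Lemma~\ref{lem:random1}. For part (ii), the expected number of edges in $\G(2(cn-1),d/n)$ is $\binom{2(cn-1)}{2} d/n \sim 2dc^2 n$, and concentration around this mean follows immediately from Chernoff's bound, so a.a.s.\ $|E(G)| \sim 2dc^2 n$.

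The heart of the argument is part (i). First I would fix two disjoint sets $S$ and $T$ with $|S|=|T|=m$, where $m := ((c-1)n-1)/2$, and observe that $e(S,T) = 0$ precisely when none of the $m^2$ potential edges between $S$ and $T$ is present, which happens with probability $(1-d/n)^{m^2} \le \exp(-d m^2/n)$. Since $m \sim (c-1)n/2$, this gives a bound of roughly $\exp(-d(c-1)^2 n/4)$. Next I would apply the union bound over all choices of the pair $(S,T)$; the number of such choices is
\[
\binom{2(cn-1)}{m}\binom{2(cn-1)-m}{m},
\]
which, after applying Stirling's formula and collecting the exponential rates, contributes a factor $\exp(h(c)\, n)$ for an explicit entropy-type exponent $h(c)$. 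Writing $N := 2(cn-1) \sim 2cn$ and $m \sim (c-1)n/2$, the leading vertices are $N - 2m \sim (c+1)n$, so the exponential rate of the binomial product is
\[
2c\log(2c) - 2\cdot\frac{c-1}{2}\log\Big(\frac{c-1}{2}\Big) - (c+1)\log(c+1),
\]
up to the standard $N\log N$ normalization. The key step is then to check that the total exponent $h(c) - d(c-1)^2/4$ is negative for the stated value of $d$; indeed $d$ is defined as exactly the value that makes this exponent vanish (up to the factor $4/(c-1)^2$ that clears the denominator), so for that choice the exponent is $0$ and I would need a slightly sharper accounting to get strict negativity, or invoke that the probability is $o(1)$ because the polynomial Stirling corrections push it below~$1$.

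The main obstacle I anticipate is the careful bookkeeping of the exponential rates so that the union bound combined with the edge-absence probability yields exactly the threshold encoded in the definition of $d=d(c)$. In particular, one must verify that the three entropy terms in $d(c)$---namely $(2c)\log(2c)$, $-(c-1)\log((c-1)/2)$, and $-(c+1)\log(c+1)$---are precisely the Stirling contributions of the counting factors, and that multiplying through by $4/(c-1)^2$ correctly balances the $\exp(-d(c-1)^2 n/4)$ term coming from $e(S,T)=0$. Since at the critical $d$ the exponential part is neutral, I would absorb the residual polynomial factors (the $\sqrt{2\pi\,\cdot}$ terms from Stirling) to conclude $o(1)$, exactly as in the analogous final step of Lemma~\ref{lem:random1}; part (i) then follows from the first moment method.
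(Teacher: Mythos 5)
Your proposal is correct and follows essentially the same route as the paper: a first moment argument with the union bound $\binom{2cn}{(c-1)n/2}\binom{2cn-(c-1)n/2}{(c-1)n/2}$ times $\exp(-d(c-1)^2n/4)$, where the stated $d(c)$ makes the exponential rate exactly zero and the $\Theta(1/n)$ polynomial prefactor from Stirling's formula supplies the $o(1)$, plus Chernoff for part (ii). Your closing resolution of the ``neutral exponent'' issue---absorbing the residual polynomial factors rather than seeking strict negativity---is precisely what the paper's $o(\exp(\cdot))$ notation encodes.
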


\begin{proof}
Let $X$ be the random variable counting pairs of disjoint sets $S$ and $T$ such that $|S| = |T| = ((c-1)n-1)/2$ and $e(S,T)=0$. Part (i) follows from the first moment method, since
\begin{eqnarray*}
\E X &\le& \binom{2cn}{(c-1)n/2} \binom{2cn - (c-1)n/2}{(c-1)n/2} \left( 1 - \frac {d}{n} \right)^{ ((c-1)n/2)^2 } \\
&\le& \frac {(2cn)!}{((c-1)n/2)!^2 ((c+1)n)!} \exp \Big( -d (c-1)^2 n /4 \Big)\\
&=& o \left( \exp \left( \left( (2c) \log (2c) - (c-1) \log \left( \frac{c-1}{2} \right) - (c+1) \log (c+1) -\frac {d (c-1)^2}{4} \right) \right) \right) \\
&=&o(1),
\end{eqnarray*}
by the definition of $d$. 

The expected number of edges is asymptotic to
$
\binom{2cn}{2} \frac {d}{n} \sim 2c^2dn,
$
and part (ii) follows from Chernoff's bound.
\end{proof}

Using the above lemmas, we get the following upper bond for the size Ramsey numbers. 

\begin{theorem}\label{thm:random2}
Let $c > 1$ and let 
$$
d = d(c) := \frac {4}{(c-1)^2} \left( (2c) \log (2c) - (c-1) \log \left( \frac {c-1}{2} \right) - (c+1) \log (c+1) \right) .
$$
Then, a.a.s.\ $G \in \G(2(cn-1),d/n) \to (\cc,P_n)$. As a result, for any $\eps > 0$ and sufficiently large $n$, 
$$
\ram{\cc,P_n} < U_3 = U_3(c) := \left( 2c^2d + \eps \right) n.
$$
\end{theorem}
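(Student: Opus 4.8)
The plan is to obtain this theorem as an immediate corollary of the two preceding lemmas, exactly mirroring the argument used for the first part of Theorem~\ref{thm:random1}. The deterministic Lemma~\ref{lem:determ_upperbound2} reduces the arrowing property to a single expansion condition, while Lemma~\ref{lem:random2} verifies a.a.s.\ that the random graph $\G(2(cn-1),d/n)$ satisfies both this expansion condition and the desired edge count. So essentially no new work is required beyond assembling these pieces.

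First I would observe that $G\in\G(2(cn-1),d/n)$ has order exactly $2(cn-1)$, which matches the hypothesis of Lemma~\ref{lem:determ_upperbound2}. By part~(i) of Lemma~\ref{lem:random2}, a.a.s.\ every pair of disjoint sets $S,T$ with $|S|=|T|=((c-1)n-1)/2$ satisfies $e(S,T)\neq 0$; this is precisely the expansion hypothesis of Lemma~\ref{lem:determ_upperbound2}. Hence a.a.s.\ $G\to(\cc,P_n)$, which is the first assertion of the theorem.

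Next I would turn the arrowing statement into a bound on the size-Ramsey number. By part~(ii) of Lemma~\ref{lem:random2}, a.a.s.\ $|E(G)|\sim 2dc^2 n$, so for any fixed $\eps>0$ we have a.a.s.\ $|E(G)|<(2dc^2+\eps)n=U_3$. Since the intersection of two a.a.s.\ events is again a.a.s., with probability tending to $1$ the random graph $G$ simultaneously arrows $(\cc,P_n)$ and has fewer than $U_3$ edges. In particular, for all sufficiently large $n$ this probability is positive, so at least one such graph $G$ exists, and by the definition of the size-Ramsey number $\ram{\cc,P_n}\le |E(G)|<U_3$.

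I do not expect any genuine obstacle here, since the entire difficulty has been absorbed into Lemma~\ref{lem:random2} (the first-moment computation) and into the combinatorial Lemma~\ref{lem:determ_upperbound2} (which in turn rests on Theorem~\ref{thm:EFRS}). The only points that require a little care are standard: that the intersection of finitely many a.a.s.\ events remains a.a.s., and that an a.a.s.\ existence statement yields, for large $n$, a concrete graph witnessing the claimed upper bound. One could also remark, as in Theorem~\ref{thm:random1}, that $U_3(c)$ may be evaluated numerically and optimised over $c$ to produce explicit constants.
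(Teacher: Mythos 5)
Your proposal is correct and matches the paper's approach exactly: the paper derives Theorem~\ref{thm:random2} as an immediate consequence of Lemma~\ref{lem:determ_upperbound2} and Lemma~\ref{lem:random2}, just as Theorem~\ref{thm:random1} follows from the corresponding lemmas of the previous section. Your added care about intersecting the two a.a.s.\ events and extracting a witness graph is exactly the (routine) reasoning the paper leaves implicit.
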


Some numerical values are presented on Figure~\ref{fig:U1}(b). Note that $U_3(c)$ is not a decreasing function of $c$; in particular, $U_3(c) > U_3(2.5)$ for any $c > 2.5$ and so stronger bound for large values of $c$ can be obtained by monotonicity: for $c \ge 2.5$ we get $\ram{\cc,P_n} \le U_3(2.5) < 91n$. In any case, unfortunately, the bound obtained in the previous section (using binomial random graphs) is stronger than the one obtained here for \emph{any} value of $c > 1$.

\bigskip

As before, slightly better bounds are obtained for random $d$-regular graphs. We omit tedious calculations here as the very same optimization problem was considered in~\cite{DP16} (see Theorem~3.2). The goal there was to minimize the number of edges in a random $d$-regular graph of order $c'n$ (for some $c'>2$) with the property (holding a.a.s.) that no two disjoint sets of size $n(c'-2)/4$ have no edge between them. Now, we have the exact same goal with $c=c'/2 > 1$. From the result in~\cite{DP15} it follows that the best bound we get is for (approximately) $c = 5.219/2 = 2.6095$ and $d=30$: a.a.s.\ $\G_{2(cn-1),d} \to (\cc,P_n)$ giving us an upper bound of $78.3 n$ for $c \ge 2.6095$, worse than $31n$ for $c \ge 1$ following from Theorem~\ref{thm:d-reg1}.

\section{Lower bounds}

We start with an easy lower bound.
\begin{theorem}\label{thm:lb}
Let $c \in \R_+$. Then,
$$
\ram{\cc,P_n} \ge 2(n-1).
$$
\end{theorem}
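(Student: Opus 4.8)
The plan is to prove the equivalent statement that every graph $G$ with $G \to (\cc, P_n)$ satisfies $|E(G)| \ge 2(n-1)$; since $\ram{\cc,P_n}$ is by definition the minimum number of edges over all such $G$, this yields the claimed bound. The key idea is to feed the arrowing property a single, carefully chosen colouring and then read off a lower bound on the edge count from the two monochromatic structures that this colouring is forced to contain.

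First I would fix a \emph{maximal acyclic subgraph} (a spanning forest) $F \subseteq G$, colour all edges of $F$ red and all remaining edges of $G$ blue. The whole point of this choice is that the red graph $F$ is a forest and hence contains no cycle whatsoever; in particular it contains no red cycle of length at most $cn$, and this holds \emph{uniformly in $c$}, which is exactly why the resulting bound is valid for every $c \in \R_+$. Since $G \to (\cc,P_n)$ and no red cycle is present, the colouring must produce a blue copy of $P_n$.

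Next I would localise and count. The blue path $P_n$ is connected, so it lies inside a single connected component $C$ of $G$, and consequently $|V(C)| \ge n$. The $n-1$ edges of the path are blue, hence they belong to $E(G) \setminus F$, whereas the restriction of $F$ to $C$ is a spanning tree of $C$ and therefore contributes $|V(C)| - 1 \ge n-1$ red edges. These two edge sets are disjoint by construction, so $|E(G)| \ge |E(C)| \ge (|V(C)|-1) + (n-1) \ge (n-1)+(n-1) = 2(n-1)$, which is the desired inequality.

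The argument has no real obstacle beyond the one creative step: recognising that colouring a spanning forest red—rather than merely trying to avoid \emph{short} cycles—is what decouples the estimate from $c$ and, at the same time, guarantees that the forced blue path is edge-disjoint from a full red spanning tree of its component. The remaining work, namely checking that the blue path confines itself to one component and that the red and blue edge sets do not overlap, is routine bookkeeping.
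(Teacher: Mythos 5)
Your proof is correct and follows essentially the same approach as the paper: colour a maximal acyclic subgraph red (so no red cycle can exist for any $c$), deduce a forced blue $P_n$, and add the $|V|-1$ red edges to the $n-1$ blue path edges. The only cosmetic difference is that the paper first reduces to connected $G$ by colouring components independently, whereas you work with a spanning forest and localise to the component containing the blue path; the counting is identical.
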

\begin{proof}
Let $G$ be any graph such that $G \to (\cc,P_n)$. Since one can independently colour each connected component, we may assume that $G$ is connected. Let $T$ be any spanning tree of $G$. Colour the edges of $T$ red and blue otherwise. Clearly, there is no red cycle (of any length). Thus, there must be a blue path on $n$ vertices. This implies that $|V(G)| = |V(T)| \ge n$ and the number of blue edges is at least $n-1$. Hence, $|E(G)| \ge |V(T)|-1 + (n-1) \ge 2(n-1)$, and so the result holds.
\end{proof}

We will soon improve the leading constant 2. But first, in order to prepare the reader for more complicated argument, we show a weaker result which improves this constant for graphs with bounded maximum degree.
\begin{theorem}\label{thm:lb2}
Let $c\in \R_+$ and let $G$ be a graph with maximum degree $\Delta$ such that $G \to (\cc,P_n)$. Then, 
\[
|E(G)| \ge n(2+1/\Delta^2)-2.
\]
\end{theorem}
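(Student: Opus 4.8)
The plan is to split the target inequality $e(G)\ge n(2+1/\Delta^2)-2$ into two separate estimates, one of which is the trivial counting behind Theorem~\ref{thm:lb} and the other a new lower bound on the \emph{number of vertices} that an arrowing graph must carry. First I would reduce to the case that $G$ is connected: a blue $P_n$ and a red cycle each live inside a single component, so if no component arrowed we could colour the components separately and avoid both, whence some component $C$ satisfies $C\to(\cc,P_n)$; since $\Delta(C)\le\Delta$ (so $1/\Delta(C)^2\ge1/\Delta^2$) and $e(G)\ge e(C)$, it suffices to prove the bound for $C$. Writing $N=|V(G)|$ and letting $k=e(G)-(N-1)$ be the cycle rank, the goal becomes $N-1+k\ge 2n-2+n/\Delta^2$, so I would establish the two inequalities $k\ge n-1$ and $N\ge n(1+1/\Delta^2)$.

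The first is immediate and is essentially the argument of Theorem~\ref{thm:lb}: colouring any spanning tree $T$ red leaves the red graph acyclic, so by the arrowing hypothesis the blue co-tree $G-T$ must contain $P_n$; as $P_n$ has $n-1$ edges, $k=|E(G-T)|\ge n-1$. The heart of the matter is the vertex bound $N\ge n(1+1/\Delta^2)$, which I would prove by contradiction: assuming $N<n(1+1/\Delta^2)$, I would exhibit a single spanning tree whose co-tree contains no $P_n$, contradicting $G\to(\cc,P_n)$.

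To build that tree, the idea is to choose a set $I$ of vertices pairwise at distance at least $3$ in $G$, i.e.\ an independent set in $G^2$. Since $\Delta(G^2)\le\Delta^2$, a greedy selection yields $|I|\ge N/(\Delta^2+1)$. Because the vertices of $I$ are pairwise non-adjacent and share no common neighbour, the set $F_0$ of \emph{all} edges incident to $I$ is a vertex-disjoint union of stars, hence a forest, and therefore extends to a spanning tree $T\supseteq F_0$. Every vertex of $I$ then has all of its edges inside $T$, so it is isolated in the co-tree $G-T$; consequently $G-T$ has at most $N-|I|\le N\Delta^2/(\Delta^2+1)$ non-isolated vertices. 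Every vertex of a $P_n$ has degree at least $1$ along the path, so a blue $P_n$ would need $n$ non-isolated vertices, whereas the assumption $N<n(\Delta^2+1)/\Delta^2$ forces $N\Delta^2/(\Delta^2+1)<n$. Thus colouring $T$ red and $G-T$ blue creates neither a red cycle (the red graph is a tree) nor a blue $P_n$, the desired contradiction. Combining $N\ge n(1+1/\Delta^2)$ with $k\ge n-1$ gives $e(G)=N-1+k\ge n(2+1/\Delta^2)-2$.

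The step I expect to be the main obstacle is the vertex bound, and within it the precise matching of parameters: one must check that the distance-$3$ condition is exactly what guarantees $F_0$ is a forest (so that it genuinely extends to a spanning tree) and that ``isolating $I$'' leaves strictly fewer than $n$ survivors. The exponent $2$ in $1/\Delta^2$ is forced here, since it is the square $G^2$ (max degree $\le\Delta^2$) that controls the size of the distance-$3$ independent set, and lining up the greedy bound $|I|\ge N/(\Delta^2+1)$ with the threshold $N<n(1+1/\Delta^2)$ is the crux. Everything else reduces to the identity $e(G)=N-1+k$ and the one-line argument for $k\ge n-1$.
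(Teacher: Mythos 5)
Your proof is correct and follows essentially the same route as the paper's: an independent set in $G^2$ of size at least $|V|/(\Delta^2+1)$ whose incident edges form a vertex-disjoint union of stars, which forces any blue $P_n$ to avoid those vertices and yields $|V| \ge n(1+1/\Delta^2)$, after which the spanning-tree recolouring supplies the additional $n-1$ blue path edges. The only cosmetic differences are that you prove the vertex bound by contradiction and colour red a full spanning tree containing the star forest, whereas the paper colours red just the edges between $A$ and $V\setminus A$ and reads off $|V\setminus A|\ge n$ directly.
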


\begin{proof}
Let $G=(V,E)$ be a connected graph with maximum degree $\Delta$ such that $G \to (\cc,P_n)$. We will start by showing that $|V| \ge n(1 + 1/\Delta^2)$. Consider $G^{2}$ (recall that two vertices are adjacent in $G^2$ if they are at distance at most $2$ in $G$). Clearly, the maximum degree of $G^{2}$ is at most $\Delta^2$. Moreover, observe that any independent set $A$ in $G^{2}$ induces a forest between $A$ and $V\setminus A$ in $G$ (in fact, a collection of disjoint stars as no vertex from $V\setminus A$ is adjacent to more than one vertex from $A$ in $G$). Finally, clearly there is an independent set~$A$ in $G^2$ of size at least $|V|/(\Delta^2+1)$.

Now, let us colour the edges of $G$ as follows: colour red all edges between $A$ and $V\setminus A$, and blue otherwise. Obviously there is no red cycle and so there must be a blue path on $n$ vertices. Such path must be entirely contained in $G[V\setminus A]$ as $G[A]$ is an empty graph. Thus, $|V\setminus A| \ge n$ and we get
\[
|V| = |A| + |V\setminus A| \ge |V|/(\Delta^2+1) + n
\]
implying that $|V| \ge n(1 + 1/\Delta^2)$, as required.

The rest of the proof is straightforward. We recolour $G$ as in the proof of Theorem~\ref{thm:lb} obtaining 
\[
|E| \ge |V|-1 + (n-1) = n(2+1/\Delta^2)-2.
\]
The bound holds.
\end{proof}

Using the ideas from the above proof we improve Theorem~\ref{thm:lb}. The improvement of the leading constant might seem negligible. However, it was not clear if one can move away from the constant 2. The observation below answers this question. 

\begin{theorem}
Let $c \in \R_+$. Then for sufficiently large $n$ we have
$$
\ram{\cc,P_n} \ge 2.00365 n.
$$
\end{theorem}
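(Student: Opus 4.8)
The plan is to refine the argument of Theorem~\ref{thm:lb2}, but without the restriction to bounded maximum degree. Recall that the weak bound $2(n-1)$ came from: take a spanning tree (red) and colour everything else blue, forcing a blue $P_n$, so $|V| \ge n$ and $|E| \ge (|V|-1) + (n-1)$. Theorem~\ref{thm:lb2} improved this by finding a sparse ``separating'' set $A$ (independent in $G^2$) whose removal still leaves room for a blue path, yielding $|V| \ge n(1+1/\Delta^2)$. The obstacle now is that without a degree bound we cannot guarantee a large independent set in $G^2$. So first I would dispose of the high-degree case: if $G$ has many vertices of large degree, then $G$ already has many edges and the bound $2.00365n$ follows directly. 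Concretely, fix a threshold $\Delta_0$; if, say, at least $\delta n$ vertices have degree exceeding $\Delta_0$ (for suitable small $\delta$), then $2|E| = \sum_v \deg(v) \ge \Delta_0 \delta n$, and choosing $\Delta_0$ large makes this exceed $2.00365 n$ immediately.

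In the complementary case, all but few vertices have degree at most $\Delta_0$. I would then run the $G^2$-independent-set argument on the low-degree part of the graph. Let $V'$ be the set of vertices of degree at most $\Delta_0$; restricting attention to $G[V']$, its square has maximum degree at most $\Delta_0^2$, so it contains an independent set $A$ of size at least roughly $|V'|/(\Delta_0^2+1)$. Colouring all edges incident to $A$ red and everything else blue kills every red cycle (the red edges form disjoint stars centred away from $A$, exactly as in Theorem~\ref{thm:lb2}), so a blue $P_n$ must live in $G[V \setminus A]$, giving $|V \setminus A| \ge n$ and hence
\[
|V| \ge n + |A| \ge n + \frac{|V'|}{\Delta_0^2 + 1}.
\]
Combined with $|E| \ge (|V|-1) + (n-1)$ from the spanning-tree recolouring, this already pushes the leading constant above $2$ once $|V'|$ is known to be close to $|V|$ (which holds in this case, since few vertices are excluded). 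The two cases are balanced by choosing $\Delta_0$ and $\delta$ so that both branches clear the target $2.00365 n$; the numerical constant $2.00365$ presumably comes from optimising this trade-off.

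The main obstacle I anticipate is the interaction between the two colourings and the accounting of edges: the spanning-tree red edges and the star red edges are different colourings, so I must be careful to apply the $|V|$-lower-bound from one colouring and the edge-count from another, not conflate them. A cleaner route is to prove the vertex bound $|V| \ge n(1 + \eta)$ for an explicit $\eta > 0$ purely structurally (independent of any colouring), and then feed it once into $|E| \ge |V| - 1 + (n-1)$. The delicate point is that the independent set $A$ must avoid interfering with the forced blue path, which is why $A$ is taken independent in $G^2$ rather than in $G$: this guarantees each vertex outside $A$ has at most one red-neighbour in $A$, keeping the red graph a star forest. Pinning down the exact constant $2.00365$ will require carefully optimising $\Delta_0$ against the fraction of high-degree vertices, and verifying that the excluded high-degree vertices contribute negligibly to the $|A|$ estimate; this optimisation is the part most likely to be fiddly, but it is routine once the two-case structure is in place.
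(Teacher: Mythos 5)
Your proposal reproduces the paper's opening moves (split off high-degree vertices, then run the $G^2$-independent-set argument on the low-degree part), but it has two genuine gaps. First, the colouring claim fails: your $A$ is independent only in $(G[V'])^2$, so two vertices of $A$ may share two common neighbours of high degree, i.e.\ outside $V'$. Colouring \emph{all} edges incident to $A$ red then creates a red $C_4$ (which lies in $\cc$ once $cn\ge 4$), so the red graph is not a star forest. The natural repair---colour red only the edges between $A$ and $V'\setminus A$, which is exactly what the paper does with $B=V\setminus V'$---restores the star forest, but then the blue $P_n$ need \emph{not} avoid $A$: the path can enter and leave $A$ through high-degree vertices, and all one can conclude is that at most roughly $|V\setminus V'|\le \delta n$ vertices of $A$ lie on the path, so the vertex bound degrades to $|V|\ge n+|A|-\delta n - O(1)$.

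Second, with this repair your two-case balancing is not merely fiddly, it is quantitatively impossible. Case 1 (degree sum) requires $\Delta_0\delta \ge 2\cdot 2.00365\approx 4.007$, i.e.\ $\delta\gtrsim 4/\Delta_0$, while the repaired Case 2 needs the weaving loss $\delta n$ to be smaller than the gain $|A|/(\Delta_0^2+1)\approx n/\Delta_0^2$, i.e.\ $\delta\lesssim 1/\Delta_0^2$; together these force $4/\Delta_0<1/\Delta_0^2$, i.e.\ $\Delta_0<1/4$, a contradiction. The paper escapes this trap with an idea absent from your proposal: it fixes a \emph{small} threshold ($B$ is the set of vertices of degree at least $10$, allowed to be up to a $2a/(d+1)\approx 0.4$ fraction of $V$) and then splits on $|B|$ versus $b|A|$ rather than versus a fixed fraction of $n$. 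When $|B|\le b|A|$, the weaving loss is at most $|B|\le b|A|$, leaving a net vertex gain of $(1-b)|A|$, and the spanning-tree recolouring finishes. When $|B|>b|A|$, the paper abandons the vertex count entirely and counts edges within a single colouring: red is a maximal forest of $G[V\setminus B]$ extending the star forest (at least $n-|B|-|A|$ edges, since maximality of $A$ forces every component of $G[V\setminus B]$ to meet $A$), while blue contributes the path ($n-1$ edges) plus at least $(d-1)/2$ further edges per vertex of $B$ (all edges at $B$ are blue and at most two per vertex lie on the path); since $b(d-3)/2>1$, the total exceeds $2n+\Omega(|A|)$. Without this second counting regime, the constant cannot be pushed above $2$.
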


\begin{proof}
Set $a=2.0037$, $b=0.5$, and $d=9$. Suppose that $G=(V,E)$ is a graph such that  $G \to (\cc,P_n)$. Clearly, $G$ has at least $n$ vertices. Let us put all vertices of degree at least $d+1$ to set $B$. We may assume that $B$ contains at most $2a/(d+1)$ fraction of vertices; otherwise, $G$ would have more than $an$ edges and we would be done. Let $A \subseteq V\setminus B$ be an independent set in $G^2$ (and so also in $G$) as in the proof of Theorem~\ref{thm:lb2}. That means the graph induced between $A$ and $V\setminus (A\cup B)$ is a collection of disjoint stars. Furthermore, we may assume that $A$ is maximal (that is, no vertex from $V\setminus (A\cup B)$ can be added to $A$ without violating this property). As in the previous proof we notice that $A$ contains at least $(1- \frac {2a}{d+1})/(d^2+1)$ fraction of vertices of $G$. Now, we need to consider two cases.

\smallskip

\textbf{Case 1:} $|B| \le b |A|$. Colour the edges between $A$ and $V\setminus (A\cup B)$ red and blue otherwise. Clearly, there is no red cycle and so there must be a blue path $P_n$. Moreover, since $|B| \le b |A|$, $(1-b)|A|$ vertices are not part of a blue $P_n$. Thus, 
\[
|V| \ge n + (1-b)|A| \ge n + (1-b)\left( \left(1- \frac {2a}{d+1}\right)/(d^2+1)\right) |V|
\]
yielding
\[
|V| \ge \left( 1 - \frac {(1-b)(1-2a/(d+1)}{d^2+1} \right)^{-1} n.
\]
Finally, we recolour $G$ as in the proof of Theorem~\ref{thm:lb} obtaining 
\[
|E| \ge |V|-1 + (n-1) 
\ge \left( 1 + \left( 1 - \frac {(1-b)(1-2a/(d+1)}{d^2+1} \right)^{-1} \right) n-2 > 2.00366 n
\]
for sufficiently large $n$.

\smallskip

\textbf{Case 2:} $|B| > b |A|$. First colour the edges between $A$ and $V\setminus (A\cup B)$ red. Then, extend the graph induced by the red edges to maximal forest in $G[V\setminus B]$; remaining edges  colour blue. Since $A$ is maximal, $G[V\setminus B]$ consists of at most $|A|$ components and so the number of red edges is at least $n - |B| - |A|$. As in the previous case there is no red cycle and so there exists a blue $P_n$. The number of blue edges that are not on such blue $P_n$ is at least $|B| (d+1-2)/2$. Consequently, the total number of edges is at least
\begin{align*}
(n - |B| - |A|) + (n-1) + |B| (d-1)/2 &\ge 2n - |A| + |B| \frac {d-3}{2} -1\\
&\ge 2n + |A| \left( \frac {b(d-3)}{2} - 1 \right) -1\\
&\ge \left( 2 + \frac {1- \frac {2a}{d+1}}{d^2+1} \left( \frac {b(d-3)}{2} - 1 \right) \right) n -1\\
& > 2.00365 n
\end{align*}
for $n$ large enough.
This completes the proof.
\end{proof}

\section{Size-Ramsey of $C_n$ versus $P_n$}

The lower bound follows immediately from the result obtained by the first and the third author of this paper~\cite{DP16}:
\[
\ram{C_n,P_n} \ge \ram{P_n,P_n} \ge 5n/2 - O(1).
\]
Let us then focus on the upper bound.

\begin{theorem}
For all even and sufficiently large $n$,
\[
\ram{C_n,P_n} \le 2257n.
\]
\end{theorem}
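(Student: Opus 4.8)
The plan is to follow the same two-step philosophy as in Section~3: first isolate a purely deterministic statement of the form ``sufficiently strong bipartite expansion of $G$ implies $G \to (C_n, P_n)$,'' and then verify that a suitable random graph on $Kn$ vertices has this expansion a.a.s.\ while keeping its number of edges below $2257 n$. The crucial new feature, compared with the family $\cc$, is that we must locate a red cycle of a \emph{prescribed} even length $n$ rather than merely a short red cycle; this is exactly why the statement is restricted to even $n$, since the cycle we produce will live inside an all-red bipartite graph and hence automatically have even length.

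For the deterministic reduction I would run verbatim the path-growing algorithm from the proof of Lemma~\ref{lem:determ_upperbound1}. Assuming there is no blue $P_n$, that algorithm halts with two disjoint sets $S$ and $T$ carrying no blue edge between them, where, since the blue path always has fewer than $n$ vertices, one obtains $|S| = |T| = m$ with $m \ge (K-1)n/2 - O(1)$ for $G$ of order $Kn$. Thus every cross edge of $G$ between $S$ and $T$ is red, so the bipartite graph $G[S,T]$ is entirely red, and it remains only to guarantee that $G[S,T]$ contains a cycle of length exactly $n$. Because $n$ is even and $n \le 2m$ holds as soon as $K \ge 2$, this is consistent; the real task is to force \emph{bipancyclicity} of the red bipartite expander in a length window containing $n$.

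The technical heart, and the main obstacle, is precisely this last point: passing from expansion to a cycle of \emph{exactly} length $n$. I would establish a Pós\-a-type lemma asserting that a bipartite graph with both parts of size $m$, in which every not-too-large subset $U$ of either part satisfies a neighbourhood bound such as $|N(U)| \ge \alpha |U|$ with $\alpha$ bounded away from $1$, contains cycles of all even lengths between some $\ell_0$ and $2m - o(m)$, in particular of length $n$. The argument is the usual rotation--extension technique: build a longest path in $G[S,T]$, use expansion to show that the set of endpoints reachable by rotations is large, close up a long cycle, and then shorten it one ``double step'' at a time while maintaining a long path off the cycle, so that every even length in the relevant range is realised. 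Controlling the \emph{lower} end of this window (so that it reaches down to $n$) is the delicate part, and it is what forces the expansion factor $\alpha$, and hence the degree, to be large.

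Finally, I would take $G$ to be a random $d$-regular graph on $Kn$ vertices (exactly as in Theorem~\ref{thm:d-reg1}) and show, by a first-moment union bound entirely analogous to Lemmas~\ref{lem:random1} and~\ref{lem:random2}, that the required neighbourhood-expansion holds a.a.s.\ for all relevant subsets of $S$ and $T$. The number of edges is then $dKn/2$, so the leading constant is $dK/2$; optimising the pair $(K,d)$ subject to the expansion demanded by the Pós\-a lemma is what produces the explicit bound $2257 n$. The reason this constant is so much larger than the $31n$ of Theorem~\ref{thm:d-reg1} is structural: pinning the cycle length down to exactly $n$, rather than allowing any length up to $cn$, requires far stronger expansion and therefore a considerably denser host graph.
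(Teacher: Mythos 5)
Your opening reduction is the same as the paper's: run the path-growing algorithm of Lemma~\ref{lem:determ_upperbound1}, and, if there is no blue $P_n$, obtain disjoint linear-sized sets $S,T$ with all edges between them red, so that it suffices to find $C_n$ in the bipartite graph $G[S,T]$ (this is also the correct explanation of the parity restriction). But the heart of your proof---the ``P\'osa-type lemma'' asserting that expansion forces cycles of every even length in a window reaching down to exactly $n$---is left unproved, and it is a genuine gap rather than a routine verification. Concretely: (1) rotation--extension gives long paths and long cycles, but ``shortening the cycle one double step at a time'' is not something expansion supplies; producing a cycle of a \emph{prescribed} length is well known to be much harder than producing a long one, and nothing in your sketch addresses how this would be done. (2) Worse, the expansion hypothesis your lemma would need cannot survive your own union bound. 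Since the pair $(S,T)$ is produced by an adversarial colouring, the property you must verify in the random graph is: \emph{for every} pair of disjoint sets $S,T$ of size $m\approx (K-1)n/2$ in a graph on $Kn$ vertices, every $U\subseteq S$ expands into $T$. But $|V\setminus(S\cup T)|\approx n$, so whenever $|N(U)|\lesssim n$---in a $d$-regular graph, whenever $|U|\lesssim n/d$---the set $T$ can be chosen so as to avoid $N(U)$ entirely, isolating $U$ inside $G[S,T]$. Hence no $d$-regular graph with $d$ constant has the property for small sets; the only expansion that survives the union bound is at linear scales (``any two disjoint linear-sized sets span an edge''), which is exactly what yields long paths and exactly what is not known to pin down a cycle of length precisely $n$.

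The paper's missing idea is to avoid any deterministic pancyclicity statement and instead use the randomness twice (two-round exposure, or sprinkling)---which is also why it works with binomial rather than regular random graphs. Write $\G((2c+1)n,d/n)$ as a union of two independent random graphs with densities $d_1/n$ and $d_2/n$. For every candidate pair $(S,T)$ with $|S|=|T|=cn$, the first round places, with failure probability exponentially small in $n$, a path on $3n$ vertices inside $G[S,T]$ (your algorithm again). The second round is then used to close a cycle of length exactly $n$ through the middle of this path: for each even $i\in[n/4,3n/4]$ one tries to close a left arc of length $i$ and a right arc of length $n-i$ using two sprinkled chords; distinct values of $i$ query pairwise disjoint potential edges, so these $n/4$ attempts are independent, and they all fail with probability at most $\left(2e^{-d_2/4}\right)^{n/4}$. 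This exponential rate can be tuned via $d_2$ to beat the $e^{\Theta(n)}$ union bound over the choices of $(S,T)$, and optimizing $c$, $d_1$, $d_2$ (the paper takes $c=2.21$, $d_1=60.34$, $d_2=93.26$) gives the constant $2257$. If you wanted to keep your route---regular graphs plus a deterministic lemma---you would first have to prove bipancyclicity from linear-scale expansion alone, a problem substantially harder than anything in this paper; the sprinkling device is the step your proposal is missing.
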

\begin{proof}
First we will estimate the probability of having a cycle $C_n$ in $\G(cn,cn,d_1/n) \cup \G(cn,cn,d_2/n)$.
In order to avoid technical problems with events not being independent, we use a classic technique known as \emph{two-round exposure} (known also as \emph{sprinkling} in the percolation literature). The observation is that a random graph $G \in G(cn,cn,d/n)$ can be viewed as a union of two independently generated random graphs $G_1 \in G(cn,cn,d_1/n)$ and $G_2 \in G(cn,cn,d_2/n)$, provided that $d/n=d_1/n + d_2/n - d_1 d_2/n^2$ (see, for example,~\cite{B01,JLR} for more information). 

Using the algorithm we exploit extensively in this paper (used for the first time in the proof of Lemma~\ref{lem:determ_upperbound1}), it follows that the probability that $\G(cn,cn,d_1/n)$ has no path of length $3n$ is at most
\begin{align}\label{ub:c1}
\binom{cn}{(c-3/2)n/2}^2& \left( 1 - \frac {d_1}{n} \right)^{ \big( (c-3/2) n/2 \big)^2 } \le \notag\\
& \quad \exp \bigg{(} \bigg{(} 2 c \log c - (c-3/2) \log \left( \frac {c-3/2}{2} \right) \notag\\
 &\qquad\qquad - (c+3/2) \log \left( \frac {c+3/2}{2} \right) - \frac {(c-3/2)^2 d_1}{4} \bigg{)} n \bigg{)}.
\end{align}
(Note that a path oscillates between the two partite stets but we do not know how the rest is partitioned. However, regardless how they are partitioned we are always guaranteed to have two sets of size $(c-3/2)n/2$ with no edge between.) 
Now let us assume that a path $(v_1, \ldots, v_{3n/2})\cup (u_1, \ldots, u_{3n/2})$ of length $3n$ was already found in $\G(cn,cn,d_1/n)$. Let us concentrate on two middle vertices $v_{3n/4}$ and $u_{3n/4}$ that we assume belong to the same partite set, and let us fix an even $i \in [n/4, 3n/4]$. We want to construct a cycle of the desired length as follows: $v_{3n/4}$ to some $v_\ell$ along the first path ($\ell < 3n/4$), to a specific $u_L$ ($L < 3n/4$), to $u_{3n/4}$ along the second path, continue to $u_{R}$ for some $R > 3n/4$, to a specific $v_r$, and go back to $v_{3n/4}$. We want the `left' half cycle to be of length $i$ (that is, $3n/4-\ell + 1 + 3n/4-L = i$), and the `right' half to be of length $n-i$ (that is, $r - 3n/4 + 1 + R - 3n/4 = n-i$). This guarantees that for different values of $i$ we always investigate disjoint set of edges. The remaining edges of the cycle, that is $\{v_\ell,u_L\}$ and $\{u_R, v_r\}$, will come from $\G(cn,cn,d_2/n)$, independently generated. Observe that we fail to find an edge on both sides with probability at most
$$
(1-d_2/n)^i + (1-d_2/n)^{n-i} \le \exp(-d_2i/n) + \exp(-d_2(n-i)/n) \le2 \exp(-d_2 / 4),
$$
since $i \in [n/4, 3n/4]$. Now, as we have $n/4$ independent events for various values of $i$ (recall that $i$ must be even), we fail to close a cycle with probability at most 
\begin{equation}\label{ub:c2}
(2 \exp(-d_2/4))^{n/4} = \exp \Big( ((\log 2)/4 - d_2/16) n \Big).
\end{equation}

Now, consider $\G((2c+1)n,d_1/n) \cup \G((2c+1)n,d_2/n) = \G( (2c+1)n, (d_1+d_2-o(1))/n )$, and assume that there is no blue $P_n$. Then, using the algorithm one more time, we get that there are two sets $S$ and $T$ with $|S|=|T|=cn$ such that all edges between $S$ and $T$ are red. Furthermore, due to~\eqref{ub:c1} and~\eqref{ub:c2} the probability that $G[S\cup T]$ contains no copy of $C_n$ is at most
\begin{align*}
& \exp \bigg{(} \bigg{(} 2 c \log c - (c-3/2) \log \left( \frac {c-3/2}{2} \right) - \notag\\
&\qquad\qquad\qquad(c+3/2) \log \left( \frac {c+3/2}{2} \right) - \frac {(c-3/2)^2 d_1}{4} \bigg{)} n \bigg{)} +\exp\Big( ((\log 2)/4 - d_2/16) n/4 \Big).
\end{align*}
On the other hand, the union bound over all choices of $S$ and $T$ contributes only
$$
\binom{(2c+1)n}{cn} \binom{(c+1)n}{cn} = \frac {((2c+1)n)!}{ (cn)!^2 n! } = o(1) \cdot \exp \left( \Big( (2c+1) \log (2c+1) - 2 c \log c \Big) n \right)
$$
number of terms. Since the number of edges present is a.a.s.\ 
$$
\binom{(2c+1)n}{2} (d_1+d_2-o(1))/n \sim \frac {(2c+1)^2}{2} (d_1+d_2) n,
$$
our goal is to minimize $(2c+1)^2 (d_1+d_2) / 2$, provided that
\[
(2c+1) \log (2c+1) - (c-3/2) \log \left( \frac {c-3/2}{2} \right) - (c+3/2) \log \left( \frac {c+3/2}{2} \right) - \frac {(c-3/2)^2 d_1}{4} \le 0
\]
and
\[
(2c+1) \log (2c+1) - 2 c \log c + (\log 2)/4 - d_2/16\le 0.
\]
One can easily check that for $c=2.21$, $d_1 = 60.34$, and $d_2=93.26$ the above inequalities hold and $(2c+1)^2 (d_1+d_2) / 2 < 2257$. 
\end{proof}

\end{document}